\date{}
\newtheorem{theorem}{Theorem}[section]
\def\boxit#1{\vbox{\hrule\hbox{\vrule\kern6pt
          \vbox{\kern6pt#1\kern6pt}\kern6pt\vrule}\hrule}}
\def\bse{\begin{eqnarray*}}
\def\ese{\end{eqnarray*}}
\def\be{\begin{eqnarray}}
\def\ee{\end{eqnarray}}
\def\bq{\begin{equation}}
\def\eq{\end{equation}}
\def\bse{\begin{eqnarray*}}
\def\ese{\end{eqnarray*}}
\def\T{^{\rm T}}
\newcommand{\corb}[1]{\textcolor{black}{#1}}
\newcommand{\bbN}{\mathbb{N}}
\newcommand{\bbS}{\mathbb{S}}
\newcommand{\bbC}{\mathbb{C}}
\newcommand{\bbP}{\mathbb{P}}
\newcommand{\bD}{\mathbf{D}}
\newcommand{\bU}{\mathbf{U}}
\newcommand{\bM}{\mathbf{M}}
\newcommand{\bN}{\mathbf{N}}
\newcommand{\bV}{\mathbf{V}}
\newcommand{\bx}{\mathbf{x}}
\newcommand{\by}{\mathbf{y}}
\newcommand{\bpsi}{\boldsymbol{\psi}}
\newcommand{\bphi}{\boldsymbol{\phi}}
\newcommand{\bSigma}{\boldsymbol{\Sigma}}
\newcommand{\0}{\mathbf{0}}
\newcommand{\mcB}{{\mathcal B}}
\newcommand{\mcC}{{\mathcal C}}
\newcommand{\mcD}{{\mathcal D}}
\newcommand{\mcE}{{\mathcal E}}
\newcommand{\mcF}{\mathcal{F}}
\newcommand{\mcK}{{\mathcal K}}
\newcommand{\mcN}{{\mathcal N}}
\newcommand{\mcO}{{\mathcal O}}
\newcommand{\mcP}{{\mathcal P}}
\newcommand{\mcQ}{{\mathcal Q}}
\newcommand{\mcT}{{\mathcal T}}
\newcommand{\eset}[1]{{\mathbb E} \left[ #1 \right] }
\def\R{\Bbb{R}}
\definecolor{darkgreen}{rgb}{0, 0.6, 0}
\definecolor{airforceblue}{rgb}{0.36, 0.54, 0.66}
\definecolor{applegreen}{rgb}{0.55, 0.71, 0.0}
\definecolor{asparagus}{rgb}{0.53, 0.66, 0.42}
\definecolor{cadetblue}{rgb}{0.37, 0.62, 0.63}
\definecolor{cambridgeblue}{rgb}{0.64, 0.76, 0.68}
\definecolor{olivine}{rgb}{0.6, 0.73, 0.45}
\definecolor{rufous}{rgb}{0.66, 0.11, 0.03}
\definecolor{sangria}{rgb}{0.57, 0.0, 0.04}
\newtheorem{defi}{Definition}
\newtheorem{prop}{Proposition}
\newtheorem{asum}{Assumption}
\theoremstyle{remark}
\newtheorem{exam}{Example}
\newtheorem{rem}{Remark}
\DeclareMathOperator{\spn}{span}
\begin{document}


 
\title{\textbf{Change Detection: A functional analysis perspective}}

 

    

    
    \author{Julio E. Castrill\'{o}n-Cand\'{a}s$^{\ddagger}$, Mark Kon$^{\ddagger}$}
    \email{jcandas@bu.edu, mkon@bu.edu}
 
 \address{
   ${\ddagger}$ Department of Mathematics and Statistics, 
  Boston University, Boston, MA 
  }
    
%

 
  
  





 \begin{abstract}
   We develop a new approach for detecting changes in the behavior of
   stochastic processes and random fields based on tensor product
   representations such as the Karhunen-Lo\`{e}ve expansion. From the
   associated
   eigenspaces of the covariance operator a series of nested function
   spaces are constructed, allowing detection of signals lying in 
   orthogonal subspaces. In particular this can succeed even if the
   stochastic behavior of the signal changes either in a global or
   local sense. A mathematical approach is developed to locate and
   measure sizes of extraneous components based on 
   construction of multilevel nested subspaces.  We show examples 
   in $\R$
   and on a spherical domain $\bbS^{2}$. However, the method is
   flexible, allowing the detection of orthogonal signals on general
   topologies, including spatio-temporal domains.
\end{abstract}

\maketitle

\noindent
    {\it Keywords:} Hilbert spaces, Karhunen-Lo\`{e}ve Expansions,
    Stochastic Processes, Random Fields, Multilevel spaces,
    Optimization

\section{Introduction}

Change detection is an important topic in statistics and has received
much attention, particularly in the context of time series and break
detection (see the literature review in
\cite{Aue2013,Jandyala2013}). There are many approaches to this
problem, including a posteriori change point analysis
\cite{Davis1995,Aue2009,Jirak2015}.  Other directions concentrate on
parameter changes \cite{Page1954,Hinkley1971,Moustakides1986,Lai1995}.
More recently, an avenue based on tracking changes in a linear model
was proposed in \cite{Chu1996} and extended in
\cite{Horvath2004,Fremdt2014,Aue2012,Wied2013,Kirch2018,Pape2016}. This
direction has been recently expanded to problems in $\R^{d}$ \cite{Dette2020} and combined with ideas involving self-normalization
\cite{Shao2010,Shao2015,Zhang2018}.


In this paper an orthogonal and new direction to change detection is developed, framed in the
context of functional analysis and tensor product representations such as
the Karhunen-Lo\`{e}ve \cite{Loeve1978} expansion. This method is very different from
the previous approaches --  Karhunen-Lo\`{e}ve (KL) expansions are an 
important method for representating stochastic processes and random fields, 
forming optimal tensor product representations. Due to the generality
of this approach, a large class of processes and fields can be
represented with high accuracy. Detection is achieved by constructing
nested subspaces adapted to eigenspaces of truncated
KL expansions.

In Section \ref{Introduction} the mathematical background is
discussed.  In particular, the KL expansion of a stochastic process is defined. In
Section \ref{section:mls} the theory of change detection via
application of nested function spaces is developed and applied
to stochastic process examples.  In Section \ref{CMLS} an algorithm for
the construction of these spaces is shown in detail.
This method is very general, allowing construction of  multilevel
bases on very general simplicial complex domains. An example application of this method to Spherical Fractional Brownian Motion
(SFBM) is shown in Section \ref{spherical}.

\section{Mathematical Background}
\label{Introduction}

The Karhunen Lo\`{e}ve expansion is
an important methodology that represents random fields in terms of
spatial-stochastic tensor expansions. It has been shown to be optimal 
in several ways, making it attractive for analysis of random fields.

Let $(\Omega,\mcF,\bbP)$ be a complete probability space, with $\Omega$ 
a set of outcomes, and $\mcF$ a $\sigma$-algebra of events equipped with
the probability measure $\bbP$.  Let $U$ be a domain of $\R^{d}$ and
$L^{2}(U)$ be the Hilbert space of all square integrable functions
$v:U \rightarrow \R$ equipped with the standard inner product
\[
\langle u,v \rangle = \int_{U} uv\, \mbox{d}\bx,
\]
for all $u(\bx),v(\bx) \in L^{2}(U)$.  In addition, let $L^{2}_{\bbP}(\Omega;
L^{2}(U))$ be the space of all functions $v:\Omega \rightarrow
L^{2}(U)$ equipped with the inner product
\[
\langle u,v \rangle_{L^{2}_{\bbP}(\Omega; L^{2}(U))} 
= \int_{\Omega} \langle u,v \rangle \, \mbox{d}\bbP,
\]
for all $u,v \in L^{2}_{\bbP}(\Omega;L^{2}(U))$.
\medskip

\begin{defi}~
\begin{itemize} 
\item Suppose $v \in L^{2}_{\bbP}(\Omega;L^{2}(U))$. Then $E_v :=
  \eset{v}$ is denoted as the mean of $v$.
\item For $v \in L^{2}_{\bbP}(\Omega;L^{2}(U))$ define the covariance function as
\[
{\rm Cov}(v(\bx,\omega),v(\by,\omega)) := \eset{ (v(\bx,\omega) -
  \eset{v(\bx,\omega)}) (v(\by,\omega) - \eset{v(\by,\omega)})}.
\]
\end{itemize}
\end{defi}
From the properties of Bochner integrals (see \cite{Light1985}) we
have that $E_v \in L^{2}(U)$ and that the covariance function
${\rm Cov}(\bx,\by) \in L^{2}(U \times U)$.  Define the operator $T:L^{2}(U)
\rightarrow L^2(U)$
\[
T(u)(\bx) := \int_{U}  {\rm Cov}(\bx,\by) u(\by)\,\mbox{d} \by
\]
for all $u \in L^{2}(U)$.  From Lemma 2 and Theorem 1 in
\cite{Harbrecht2016} there exists an orthonormal set of eigenfunctions
$\{\phi_k\}_{k \in \bbN}$, where $\phi_{k} \in L^{2}(U)$ and a
sequence of non-negative eigenvalues $\lambda_1 \geq \lambda_2 \geq
\dots$ such that $T \phi_k$ = $\lambda_k \phi_k$ for all $k \in \bbN$.

If $v \in L^{2}(\Omega;L^{2}(U))$ then from Proposition 2.8 in
\cite{Schwab2006} the random field $v$ can be represented in terms of
the \emph{Karhunen-Lo\`{e}ve} (KL) tensor product expansion
\begin{equation}
v(\bx,\omega) = E_v + \sum_{k \in \bbN} \lambda^{\frac{1}{2}}_{k} \phi_k(\bx) Y_{k}(\omega),
\end{equation}
where $\eset{Y_k Y_l} = \delta_{kl}$ and $\eset{Y_k} = 0$ for all $k,l \in \bbN$.

We now focus our attention on the truncated KL expansion, as this will be
important in the construction of change detection filters. For any $M
\in \bbN$ it is not hard to show that
\begin{equation}
\| v(\bx,\omega) - E_v - \sum_{k = 1}^{M} \lambda^{\frac{1}{2}}_{k} \phi_k(\bx) Y_{k}(\omega) \|_{L^{2}_{\bbP}(\Omega; L^{2}(U))}  
= \left( \sum_{k \geq M+1} \lambda_k \right)^{\frac{1}{2}}.
\label{intro:eqn2}
\end{equation}
Thus the decay of the eigenvalues controls the error of the
representation. Additionally, the truncated KL expansion has the property
of being optimal i.e. no other expansion of the same form is better in a sense to be specified.

To examine this further we note that from the definitions, $L^{2}_{\bbP}(\Omega;L^{2}(U))$ is isomorphic to the
tensor product space $L^{2}_{\bbP}(\Omega) \otimes L^{2}(U)$.  Suppose
that $H_M \subset L^{2}(U)$ is a finite dimensional subspace of
$L^{2}(U)$ such that $\dim H_M = M$ and $P_{H_M \otimes
  L^{2}_{\bbP}(\Omega)}: L^{2}(U) \otimes L^{2}_{\bbP}(\Omega)
\rightarrow H_M \otimes L^{2}_{\bbP}(\Omega)$ is an orthogonal
projection operator. Suppose $f \in L^{2}(U) \otimes
L^{2}_{\bbP}(\Omega)$, with $\eset{f} = 0$, then from Theorem 2.7 in
\cite{Schwab2006}
\[
\inf_{\begin{array}{c}
H_M \subset L^{2}(U) \\
\mbox{dim}\, S = M
\end{array}
} \|f - P_{H_M \otimes L^{2}_{\bbP}(\Omega)}f \|_{L^{2}_{\bbP}(\Omega) \otimes L^{2}(U)}
=
\left( \sum_{k \geq M+1} \lambda_k \right)^{\frac{1}{2}}
\]
where the infimum is achieved when $H_M = \mbox{span}\{\phi_1,\dots,\phi_{M}\}$.

The optimal expansion of any random field $v \in
L^{2}_{\bbP}(\Omega;L^{2}(U))$ will depend on the smoothness of 
$v$, which will have a direct impact on the decay of the eigenvalues
$\{ \lambda_k \}_{k=1}^{\infty}$.  Consider the Sobolev space
$H^{p}(U)$ with $p>0$, and its dual space $\tilde
H^{-p}(U)$. For functions $v \in L^{2}_{\bbP}(\Omega;H^{p}(U))$ (almost surely $p$-Sobolev smooth) we have as a consequence of Theorem 2 in \cite{Harbrecht2016}):

\begin{theorem}
If $v \in L^{2}_{\bbP}(\Omega;H^{p}(U))$, then the eigenvalues $\{
\lambda_k \}_{k=1}^{\infty}$ of the covariance operator $T:\tilde
H^{-p}(U) \rightarrow H^{p}(U)$ satisfy $\lambda_k \leq C k^{-2p}$ for
some constant $C>0$ independent of $k$ and $p$.
\label{intro:theo1}
\end{theorem}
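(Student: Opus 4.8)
The plan is to convert the extra spatial regularity encoded in the hypothesis $v \in L^{2}_{\bbP}(\Omega;H^{p}(U))$ into a spectral decay rate by factoring the covariance operator across the Sobolev scale. The key observation is that almost-sure membership of $v$ in $H^{p}(U)$ forces the covariance kernel $\mathrm{Cov}(\bx,\by) = \eset{(v(\bx,\omega)-E_v(\bx))(v(\by,\omega)-E_v(\by))}$ to carry $p$ orders of smoothness in each of its two arguments, i.e. $\mathrm{Cov} \in H^{p}(U)\otimes H^{p}(U)$, with norm controlled by $\eset{\|v-E_v\|_{H^{p}(U)}^{2}}$. This tensor regularity is exactly what makes $T$ bounded as a map $\tilde H^{-p}(U)\to H^{p}(U)$: for $u \in \tilde H^{-p}(U)=(H^{p}(U))^{*}$ one pairs $Tu(\bx)=\langle \mathrm{Cov}(\bx,\cdot),u\rangle$ and estimates in $H^{p}_{\bx}$, which is the functional-analytic content of Theorem 2 in \cite{Harbrecht2016}. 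I would first record this boundedness, writing $B:=\|T\|_{\tilde H^{-p}(U)\to H^{p}(U)}<\infty$, and note that by symmetry of the kernel $T$ is self-adjoint and positive on $L^{2}(U)$, so its eigenvalues $\lambda_k$ coincide with its singular values $s_k$.

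Next I would exhibit the factorization of the $L^{2}(U)$ operator as
\[
T\big|_{L^{2}(U)} \;=\; \iota_2 \circ T \circ \iota_1,
\]
where $\iota_1 : L^{2}(U)\hookrightarrow \tilde H^{-p}(U)$ and $\iota_2 : H^{p}(U)\hookrightarrow L^{2}(U)$ are the canonical dense embeddings, with $\iota_1$ the adjoint of $\iota_2$. The eigenvalue estimate then reduces to controlling the singular values of these two embeddings, using multiplicativity of approximation/singular numbers under composition together with $s_k(T\iota_1)\le B\,s_k(\iota_1)$:
\[
\lambda_{2k-1} \;=\; s_{2k-1}\!\left(T\big|_{L^{2}(U)}\right) \;\le\; s_k(\iota_2)\,B\,s_k(\iota_1).
\]
Because $\iota_1=\iota_2^{*}$, the two factors have identical singular values, and the Sobolev embedding estimate (Weyl/Birman--Solomyak asymptotics, in the scaling relevant to the present setting) gives $s_k(\iota_2)=s_k(\iota_1)\le C' k^{-p}$. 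Substituting yields $\lambda_{2k-1}\le B\,(C')^{2}k^{-2p}$, and reindexing to absorb the shift produces the claimed bound $\lambda_k\le C k^{-2p}$. This doubling of the exponent is precisely what the use of smoothness in \emph{both} variables (hence both embeddings) buys over the weaker $T\colon L^{2}\to H^{p}$ factorization.

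The main obstacle is the assertion that $C$ may be taken independent of $p$. A direct appeal to generic Sobolev-embedding singular-value bounds produces constants $C'$ that degrade as $p$ grows, so uniformity must be extracted from the explicit construction underlying Theorem 2 in \cite{Harbrecht2016} --- typically a fixed multiresolution, spline, or finite-element approximation of the kernel whose error constants remain bounded as $p$ increases --- combined with a normalization of $\mathrm{Cov}$ whose $H^{p}(U)\otimes H^{p}(U)$ norm is controlled uniformly in $p$. I would therefore devote the bulk of the argument to verifying that the approximation error in this construction obeys a bound of the form $(\text{const})\,k^{-2p}$ with the constant depending only on $U$ and on the normalized kernel, and only then combine it with the spectral factorization above. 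The remaining ingredients --- boundedness of $T$ on the Sobolev scale, self-adjointness, and multiplicativity of singular numbers --- are standard and routine once this uniformity is in hand.
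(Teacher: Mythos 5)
The first thing to know is that the paper contains no proof of this theorem at all: it is stated as a direct consequence of Theorem 2 in \cite{Harbrecht2016}, so any self-contained argument such as yours is necessarily a different route. Your skeleton is the right one and its individual ingredients are sound: $v \in L^{2}_{\bbP}(\Omega;H^{p}(U))$ gives ${\rm Cov} \in H^{p}(U)\otimes H^{p}(U)$ with norm bounded by $\eset{\|v-E_v\|^{2}_{H^{p}(U)}}$, hence $T:\tilde H^{-p}(U)\rightarrow H^{p}(U)$ is bounded; the Gelfand-triple factorization $T|_{L^{2}(U)}=\iota_2\circ T\circ\iota_1$ with $\iota_1=\iota_2^{*}$ is legitimate; the eigenvalues of the compact, positive, self-adjoint $T|_{L^{2}(U)}$ do coincide with its singular values; and the multiplicativity bound $s_{2k-1}(\iota_2 T\iota_1)\leq s_k(\iota_2)\,B\,s_k(\iota_1)$ is standard.

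However, there are two genuine gaps. First, the singular-value asymptotics you invoke are wrong in the paper's setting: for a bounded domain $U\subset\R^{d}$, the Birman--Solomyak/Weyl estimates give $s_k\bigl(H^{p}(U)\hookrightarrow L^{2}(U)\bigr)\asymp k^{-p/d}$, not $k^{-p}$; the hedge ``in the scaling relevant to the present setting'' conceals this. Your chain of inequalities therefore delivers $\lambda_k \lesssim k^{-2p/d}$, which matches the claimed rate only when $d=1$, whereas the theorem is stated for $U\subset\R^{d}$ and the paper applies the framework on the sphere $\bbS^{2}$ (where $d=2$). To be fair, this defect is inherited from the statement itself, which drops the dimension that appears in the rate of the cited source; but as a proof of the statement as printed, the argument fails for $d\geq 2$. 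Second, the claimed $p$-independence of $C$ does not survive your own steps, quite apart from the $p$-dependence of the embedding constants: the final reindexing from $\lambda_{2k-1}$ to $\lambda_k$ costs a factor of $4^{p}$, since $\lambda_{2k}\leq\lambda_{2k-1}\leq B(C')^{2}k^{-2p}=B(C')^{2}\,2^{2p}\,(2k)^{-2p}$. Your plan to extract uniformity from the explicit construction behind Theorem 2 of \cite{Harbrecht2016} correctly names the obstacle but does not overcome it. What your method honestly proves is $\lambda_k\leq C(p,v)\,k^{-2p/d}$ --- a useful and essentially sharp statement, but not the one printed.
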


\begin{exam} (\textit{Brownian Motion}) 
Suppose that $U = [0,1]$ and $W_t$ is the Wiener process with 
covariance function $\mbox{Cov}(t,s) = \min \{s,t\}$. 
The KL expansion of $W_t$ requires solving for eigenspaces defined by
\[
\int_{U}  \min \{s,t\} \phi_{k}(s)\,  ds = \lambda_k \phi_{k}(t). 
\]
For this type of stochastic process it is possible to analytically
solve for the eigenpair $(\lambda_k, \phi_k)$ for all $k \in \bbN$.
In \cite{Wang2008} it is shown that $\lambda_k = \frac{4}{(2k -
  1)^2 \pi^2}$, $\phi_k(t) = \sqrt{2} \sin(t/\sqrt{\lambda_k})$ and
$Y_{k}(\omega) \sim \mcN(0,1)$ i.i.d.  Thus we have
\[
W_t = \sqrt{2} \sum_{k \geq 1} \frac{2}{(2k -1)\pi} \sin((k - 1/2) \pi t) Y_k(\omega).
\]
\end{exam}

\begin{rem}
In general the KL expansion can be difficult to obtain. In particular, for non-Gaussian processes the
random coefficients $\{Y_{k}(\omega)\}_{k = 1}^{M}$ are not generally
independent. However, as will be shown in Section \ref{section:mls},
to build a change detection filter it is not necessary to explicitly
obtain the random coefficients $\{Y_{k}(\omega)\}_{k = 1}^{M}$. It is
sufficient only to characterize the eigenspaces from the decomposition $\{
(\lambda_k,\phi_k)\}_{k=1}^{M}$.  In practice, from a set of
realizations of $v(\bx,\omega) \in L^{2}_{\bbP}(\Omega;L^{2}(U))$ the
pair $(\lambda_k,\phi_k)$, for $k = 1,\dots,M$, can be estimated empirically using
the method of snapshots \cite{Castrillon2002}.
\end{rem}

\section{Multilevel orthogonal eigenspaces}
\label{section:mls}
Our main goal is to detect signals defined on the domain $U$ that do
not belong to the family of finite dimensional truncated KL expansions
\begin{equation}
v_M(\bx,\omega)  - E_v =  \sum_{k = 1}^{M} \lambda^{\frac{1}{2}}_{k} \phi_k(\bx) Y_{k}(\omega).
\end{equation}
To be more precise, we seek to detect signals that are orthogonal to
the eigenspace spanned by $\{\phi_1,\dots,\phi_M\}$ in a local and/or
global sense.


\begin{asum}
Without loss of generality assume that $E_v = 0$, and consider a
sequence of nested subspaces $V_0 \subset V_1 \dots \subset L^{2}(U)$
such that
\[
\overline{\bigcup_{k \in \bbN_{0}} V_{k}} = L^{2}(U) 
\]
and $V_{0} := \spn \{\phi_1,\phi_2, \dots,\phi_M \}$.  Furthermore,
let the subspaces $W_k \subset L^{2}(U)$,
for $k = 0,1,2,\dots$, be defined by $V_{k+1} = V_{k} \oplus W_{k}$ (all direct sums are orthogonal), so that
\[
\overline{ V_0 \bigoplus_{k \in \bbN_{0}} W_{k}} = L^{2}(U).
\]
\label{mls:assum1}
\end{asum}

\begin{prop}
For all $k \in \bbN_{0}$ and any function $ \psi \in W_{k}$ we have
\[
\int_U \phi_{l} \psi\,\emph{d} \bx = 0
\]
for $l = 1,\dots,M$.
\label{mls:prop1}
\end{prop}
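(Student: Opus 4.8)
The plan is to prove the statement using only two structural ingredients supplied by Assumption~\ref{mls:assum1}: the nestedness of the family $\{V_k\}$, and the fact that every decomposition $V_{k+1} = V_k \oplus W_k$ is an \emph{orthogonal} direct sum. First I would record the trivial but essential observation that, by the very definition of $V_0$, each eigenfunction $\phi_l$ with $l = 1,\dots,M$ belongs to $V_0 = \spn\{\phi_1,\dots,\phi_M\}$.

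Next I would promote this membership from $V_0$ to every $V_k$. Since the subspaces are nested, $V_0 \subset V_1 \subset \cdots \subset V_k$, and by transitivity of inclusion (or a one-line induction on $k$) we obtain $V_0 \subset V_k$ for all $k \in \bbN_{0}$; in particular $\phi_l \in V_k$ for each $l = 1,\dots,M$. The key step is then to use the orthogonality of the splitting $V_{k+1} = V_k \oplus W_k$: because this direct sum is orthogonal, any $\psi \in W_k$ is orthogonal to the entire subspace $V_k$, and not merely to some complementary vector. Combining $\psi \perp V_k$ with $\phi_l \in V_k$ yields
\[
\int_U \phi_l \psi \, \emph{d}\bx = \langle \phi_l, \psi \rangle = 0
\]
for every $l = 1,\dots,M$, which is precisely the assertion.

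I do not expect a genuine obstacle here, as the proposition is essentially a restatement of the orthogonal-complement structure built into the assumption. The only points demanding a moment of care are making explicit that $W_k$ is orthogonal to all of $V_k$ (this is what "orthogonal direct sum" buys us), and observing that nestedness is exactly what upgrades "orthogonal to $V_k$" into "orthogonal to $V_0$," so that the eigenfunctions $\phi_l$, which live only in $V_0$, are reached for arbitrary index $k$.
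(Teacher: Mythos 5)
Your proof is correct and follows the same route as the paper's: the paper's one-line argument likewise combines the orthogonality of the splitting $V_{k+1} = V_k \oplus W_k$ with the nestedness $V_0 \subset V_k$ to conclude $W_k \perp V_0$. You have simply made explicit the two steps (nestedness by induction, and orthogonality to all of $V_k$) that the paper leaves implicit.
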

\begin{proof}
Since $V_{k} = V_{k-1} \oplus W_{k-1}$, it follows $W_{k} \perp V_{0}$.
\end{proof}

\medskip

Suppose $v \in L^{2}_{\bbP}(\Omega;L^{2}(U))$.  Then the projection of
$v$ onto the multilevel spaces $\{W_{k}\}_{k \in \bbN}$ characterizes
the signal in terms of components orthogonal to the eigenspace
$V_{0}$. Given that computational power is limited, we seek to
construct the space $V_{0}$ from the eigenfunctions $\phi_1, \phi_2,
\dots, \phi_M$ so that
\[
\|v(\bx,\omega) - v_{M}(\bx,\omega)
\|_{L^{2}_{\bbP}(\Omega; L^{2}(U))}  
= \left( \sum_{k \geq M+1} \lambda_k \right)^{\frac{1}{2}} \leq \mbox{\tt tol}
\]
for a desired tolerance $\mbox{\tt tol}>0$. The choice of $M \in \bbN$ has a
direct impact on the magnitude of the projection of $v$ onto the
sum of the remainder spaces $\{W_{k}\}_{k \in \bbN_0}$. We shall now study the
effect of truncating the KL expansion of $v \in
L^{2}_{\bbP}(\Omega;L^{2}(U))$ onto the above projections.

\begin{rem}
The following discussion is also applicable to other non-KL expansions
of random fields of the same form. 
%
The coefficients $\lambda_k$ and
functions $\phi_k$ do not necessarily need to be restricted to the eigenvalue
decomposition of the covariance operator $T:L^{2}(U) \rightarrow
L^{2}(U)$.  However, for simplicity of the exposition, 
for non-KL expansions we use the same notation and it is still assumed that $\lambda_1 \geq
\lambda_2 \geq \dots > 0$ and $\phi_1,\phi_2,\dots$ form an orthogonal
set. In the rest of the paper we assume KL expansion unless otherwise
noted.
\end{rem}


\begin{asum}
For all $l \in \bbN_0$ let $\{ \{ \psi^{l}_{k} \}_{k = 1}^{M_l} \}_{l
  \in \bbN_{0}}$ be a collection of orthonormal functions with
$W_{l} = \spn \{\psi_1^l, \dots, \psi_{M_{l}}^l \}$ and $M_l := \dim
W_l$.
\label{mls:assum2}
\end{asum}
Since the $L^{2}(U)$ basis $\{ \{ \psi^{l}_{k} \}_{k = 1}^{M_l} \}_{l
  \in \bbN}$ is orthonormal, for any function $g \in L^{2}(U)$ the
orthogonal projection coefficient onto the function $\psi^{l}_{k} \in
W_{l}$ is 
\begin{equation}
d^l_k := \int_U g \psi^l_k \,\mbox{d} \bx.    
\label{mls:eqn0}
\end{equation}

We will now study the effect of the truncation parameter $M$ on the
projection coefficients on the spaces $W_{l}$ for $l \in \bbN_{0}$.


\begin{theorem}
Suppose that $v \in L^{2}_{\bbP}(\Omega;L^{2}(U))$ with KL expansion
\[
v(\bx,\omega) = 
\sum_{p \in \bbN} \lambda^{\frac{1}{2}}_{p} \phi_p(\bx) Y_{p}(\omega).
\]
Then for all $l \in \bbN_0$, $k = 1,\dots, M_l$ and projection 
coefficients 
\[
d^l_k(\omega) = \int_U v(\bx,\omega) \psi^l_k \,\mbox{d} \bx
\]
a.s. then 
\[
\eset{d^{l}_k} =  0 
\ \ \,\mbox{\rm and}\,\,\ \ \ 
\eset{(d^{l}_k)^2} \leq \sum_{i\geq M+1} \lambda_{i}.
\]
\end{theorem}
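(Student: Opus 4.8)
The plan is to substitute the KL expansion of $v$ into the integral defining $d^l_k(\omega)$ and exploit the orthogonality of $W_l$ to $V_0$ recorded in Proposition \ref{mls:prop1}. Writing $c_p := \int_U \phi_p \psi^l_k\,\mbox{d}\bx = \langle \phi_p, \psi^l_k \rangle$ for the spatial overlap coefficients, interchanging the series with the $\bx$-integral gives
\[
d^l_k(\omega) = \sum_{p \in \bbN} \lambda_p^{1/2} c_p\, Y_p(\omega).
\]
Since $\psi^l_k \in W_l$ and $W_l \perp V_0 = \spn\{\phi_1,\dots,\phi_M\}$, Proposition \ref{mls:prop1} forces $c_p = 0$ for $p = 1,\dots,M$, so the sum runs only over $p \geq M+1$. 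This is the structural step that produces the truncation index $M$ in the final bound.

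With the series reduced to $d^l_k(\omega) = \sum_{p \geq M+1} \lambda_p^{1/2} c_p\, Y_p(\omega)$, the mean follows by passing $\eset{\cdot}$ through the series and using $\eset{Y_p} = 0$, giving $\eset{d^l_k} = 0$. For the second moment I would square the series and take expectations term by term; the orthonormality relation $\eset{Y_p Y_q} = \delta_{pq}$ collapses the resulting double sum to
\[
\eset{(d^l_k)^2} = \sum_{p \geq M+1} \lambda_p\, c_p^2.
\]
To reach the stated inequality, observe that by the Cauchy--Schwarz inequality (equivalently, Bessel's inequality for the orthonormal family $\{\phi_p\}$) each coefficient obeys $c_p^2 \leq \|\phi_p\|^2 \|\psi^l_k\|^2 = 1$, since both $\phi_p$ and $\psi^l_k$ are unit vectors. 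Hence $\lambda_p c_p^2 \leq \lambda_p$ for every $p$, and summing over $p \geq M+1$ yields $\eset{(d^l_k)^2} \leq \sum_{i \geq M+1} \lambda_i$.

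The main obstacle is rigorously justifying the interchanges — the series with the spatial integral, and with the two expectations — which are not purely formal but rest on the $L^2_{\bbP}(\Omega;L^2(U))$-convergence of the KL expansion quantified in equation (\ref{intro:eqn2}). I would handle all three at once by noting that the map $v \mapsto \langle v(\cdot,\omega), \psi^l_k \rangle$ is a bounded linear operator from $L^2_{\bbP}(\Omega;L^2(U))$ into $L^2_{\bbP}(\Omega)$, with operator norm at most $\|\psi^l_k\| = 1$ by Cauchy--Schwarz under the integral sign. Consequently the partial sums of $d^l_k$ converge to $d^l_k$ in $L^2_{\bbP}(\Omega)$, i.e. in mean square, and mean-square convergence carries both the first and the second moments to the limit — legitimizing the termwise computations above and completing the proof.
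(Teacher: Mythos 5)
Your proof is correct and follows essentially the same route as the paper's: substitute the KL expansion, invoke Proposition \ref{mls:prop1} to annihilate the first $M$ terms, collapse the double sum via $\eset{Y_pY_q}=\delta_{pq}$, and finish with Cauchy--Schwarz and the normalization $\|\phi_p\|_{L^2(U)}=\|\psi^l_k\|_{L^2(U)}=1$. The only difference is that you explicitly justify the series--integral--expectation interchanges via boundedness of $v\mapsto\langle v(\cdot,\omega),\psi^l_k\rangle$ on $L^2_{\bbP}(\Omega;L^2(U))$, a point the paper passes over silently; this is a welcome tightening, not a different argument.
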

\begin{proof} $\eset{d^{l}_k} =  0$ follows trivially from
$\eset{Y_{i}} = 0$ for all $i \in \bbN_0$.
From Proposition \ref{mls:prop1} and $W_{k} \perp V_{0}$ we have that
\[
\begin{split}
\eset{(d^{l}_k)^2}
&= \eset{\left(\int_U v \psi^l_k \,\mbox{d} \bx \right)^{2}} = 
\eset{\left( \int_U \sum_{i\geq M+1} \lambda^{\frac{1}{2}}_{i} \phi_i(\bx)\psi^l_k(\bx)Y_{i}(\omega)  \,\mbox{d} \bx \right)^{2}} \\
& = 
\eset{
\sum_{i\geq M+1}  \sum_{j \geq M+1} 
Y_{i}(\omega)  Y_{j}(\omega)  \lambda^{\frac{1}{2}}_{i}  \lambda^{\frac{1}{2}}_{j}  
\int_U \phi_i(\bx)\psi^l_k(\bx)\,\mbox{d} \bx
\int_U \phi_j(\bx)\psi^l_k(\bx)\,\mbox{d} \bx}
\end{split}
\]
From the  property that  $\eset{Y_i Y_j} = \delta_{ij}$
($i,j \in \bbN_0$),  we have that
\[
\eset{(d^{l}_k)^2} = 
\sum_{i\geq M+1}  
\lambda_{i}
\int_U \phi_i(\bx)\psi^l_k(\bx)\,\mbox{d} \bx
\int_U \phi_i(\bx)\psi^l_k(\bx)\,\mbox{d} \bx. \\
\]
The conclusion follows from Cauchy-Schwartz and the orthonormality of
the basis $\{ \{ \psi^{l}_{k} \}_{k = 1}^{M_l} \}_{l = 0}^{\infty}$.
\end{proof}

\begin{rem}
As $M$ increases not only the approximation error of the KL expansion
is reduced and dominated by the sum of eigenvalues, but the variance
of the coefficients $d^l_k$ is also controlled by the same quantity.
Thus by using the Chebyshev inequality the projection coefficients on
$W_l$ converge in probability to 0 if $\left( \sum_{i \geq M+1}
\lambda_i \right) \rightarrow 0$. The significance of this theorem is t
hat each of the projection coefficients
becomes more deterministic as the truncation parameter $M$ is increased.
\end{rem}

\subsection{Global detector}

Suppose that $d^l_k$ are the projection coefficients of a novel signal
$u(\bx,\omega) \in L^{2}_{\bbP}(\Omega;L^{2}(U))$.
The coefficients $d^l_k$ provide a mechanism to detect the magnitude
of the novel part of the signal
that is orthogonal to the eigenspace
$V_0$. In more colloquial terms, we desire to detect the components of
$u(\bx,\omega)$ with stochastic properties that are different from
the eigenspace.

Suppose that $u(\bx,\omega) = v_{M}(\bx,\omega) + w(\bx,\omega)$
i.e. the signal $u(\bx,\omega)$ is formed from the components
$v_{M}(\bx,\omega) \in V_0$ and $w(\bx,\omega) \in V_0^{\perp}$ (See
Figure \ref{mls:fig1}). The goal then is to detect the orthogonal
component $w(\bx,\omega)$ that does not belong in the eigenspace
$V_0$.


\begin{figure}[htb]
    \centering
    \begin{tikzpicture}
    \node at (0,0)
    {
        \includegraphics[width = 8cm, height = 8cm]{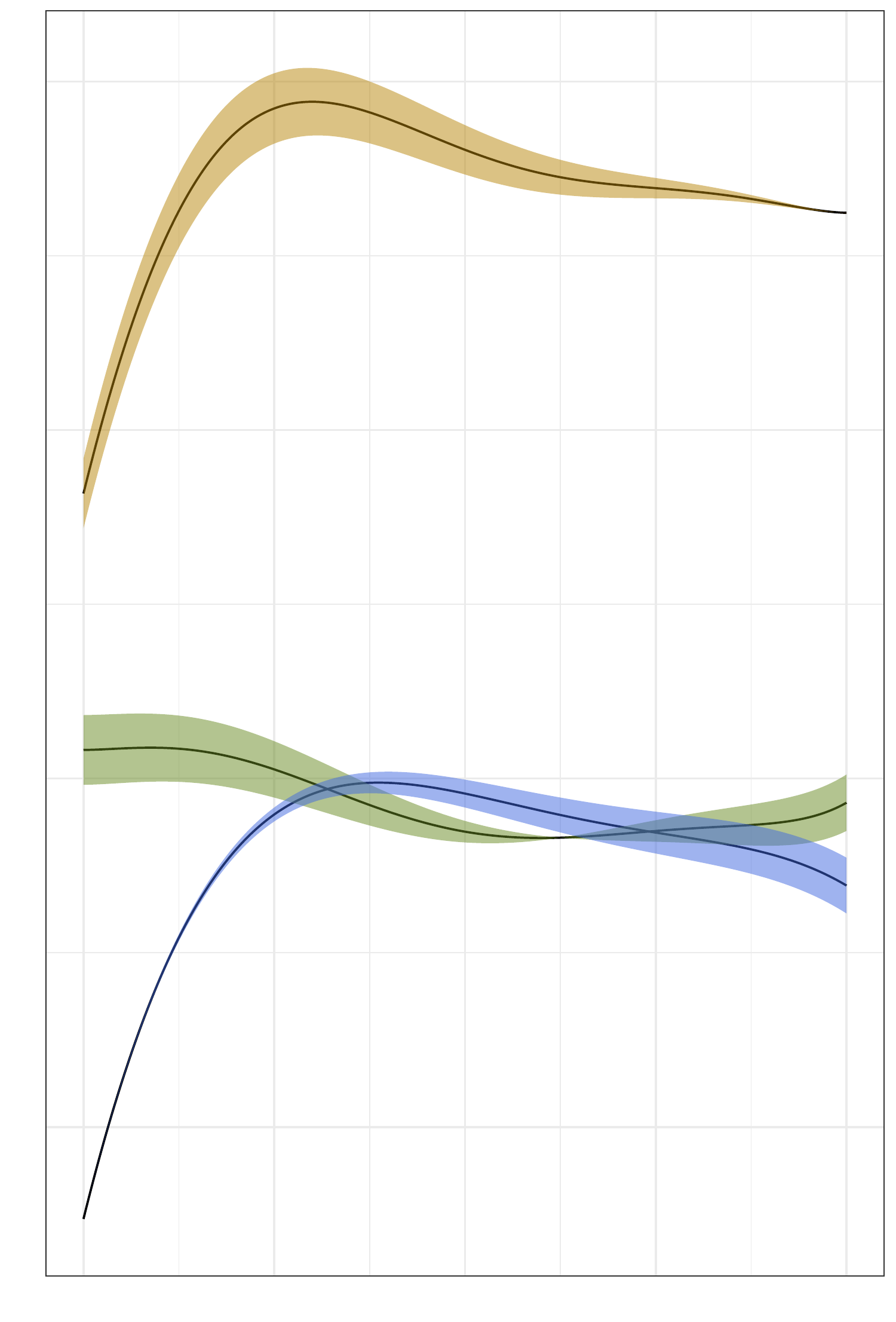}
    };
    
    \node at (2,3.25) {$u(\bx,\omega)$};
    \node at (-2.25,0.10) {$v_{M}(\bx,\omega)$};
    \node at (-2.0,-2.25) {$w(\bx,\omega)$};

    \end{tikzpicture}
    \caption{Decomposition of the signal $u(\bx,\omega)$ into the two
      orthogonal components $v_M(\bx,\omega)$ and
      $w(\bx,\omega)$. Given the known random field $v_M(\bx,\omega)
      \in V_0$ a.s. in $\Omega$ the objective is to detect the
      orthogonal component $w(\bx,\omega) \in V_0^{\perp}$ a.s..}
    \label{mls:fig1}
\end{figure}

In the following theorems it is assumed that $d^{l}_k$ are defined
as in equation \eqref{mls:eqn0}.
\begin{theorem}
Suppose that $u(\bx,\omega) = v_{M}(\bx,\omega) + w(\bx,\omega)$ for
some $w(\bx,\omega) \in L^{2}_{\bbP}(\Omega;L^{2}(U)) $ and
$w(\bx,\omega) \perp V_{0}$ almost surely.  Then, almost surely,
\[
\sum_{l \in \bbN_0} 
\sum_{k=1}^{M_l}
(d^{l}_{k})^{2} = 
\| w(\bx,\omega)\|^{2}_{L^{2}(U)}
\]
and
\[
\sum_{l \in \bbN_0} 
\sum_{k=1}^{M_l}
\eset{(d^{l}_{k})^{2}} = 
\| w(\bx,\omega)\|^{2}_{ L^{2}_{\bbP}(\Omega;L^{2}(U)) }.
\]
\end{theorem}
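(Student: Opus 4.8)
The plan is to reduce both identities to Parseval's theorem applied on the orthogonal complement $V_0^{\perp}$. The starting observation is that Assumption~\ref{mls:assum1} gives $\overline{\bigoplus_{k \in \bbN_0} W_k} = V_0^{\perp}$: since $L^{2}(U) = \overline{V_0 \bigoplus_{k \in \bbN_0} W_k}$ is an orthogonal decomposition and $V_0$ is the remaining orthogonal summand, the closed span of the $W_k$ must exhaust exactly $V_0^{\perp}$. Combined with Assumption~\ref{mls:assum2}, this shows that the family $\{\psi^{l}_{k} : l \in \bbN_0,\ k = 1,\dots,M_l\}$ is a complete orthonormal system for $V_0^{\perp}$.

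Next I would simplify the coefficients. For almost every $\omega$ we have $v_M(\cdot,\omega) \in V_0$ and, by Proposition~\ref{mls:prop1}, each $\psi^{l}_{k} \in W_l$ is orthogonal to $V_0$; hence the $v_M$ contribution to $d^{l}_{k}$ vanishes and
\[
d^{l}_{k}(\omega) = \int_U \bigl(v_M(\bx,\omega) + w(\bx,\omega)\bigr)\psi^{l}_{k}\,\mbox{d}\bx = \int_U w(\bx,\omega)\,\psi^{l}_{k}\,\mbox{d}\bx .
\]
Thus $d^{l}_{k}(\omega)$ is exactly the expansion coefficient of $w(\cdot,\omega)$ against the orthonormal system. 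Since $w \in L^{2}_{\bbP}(\Omega;L^{2}(U))$ and $w(\cdot,\omega) \perp V_0$ almost surely, for such $\omega$ the function $w(\cdot,\omega)$ lies in $V_0^{\perp} \cap L^{2}(U)$, so Parseval's identity for the complete orthonormal system yields the first (pathwise) equation almost surely.

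Finally I would pass to expectations. Taking $\eset{\cdot}$ of the first identity and using the definition $\|w\|^{2}_{L^{2}_{\bbP}(\Omega;L^{2}(U))} = \eset{\|w(\cdot,\omega)\|^{2}_{L^{2}(U)}}$, the second identity follows provided the expectation can be interchanged with the double sum; since every summand $(d^{l}_{k})^{2}$ is nonnegative, Tonelli's theorem (equivalently monotone convergence applied to partial sums) justifies this interchange. The main technical obstacle is the completeness claim in the first paragraph --- one must verify that the closure of $\bigoplus_{k} W_k$ is all of $V_0^{\perp}$ and not merely a proper subspace, which is exactly where Assumption~\ref{mls:assum1} is essential; the remaining measurability of $\omega \mapsto d^{l}_{k}(\omega)$ and of $\omega \mapsto \|w(\cdot,\omega)\|^{2}_{L^{2}(U)}$ is inherited from the Hilbert-space structure of $L^{2}_{\bbP}(\Omega;L^{2}(U))$ and is routine.
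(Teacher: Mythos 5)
Your proof is correct and follows essentially the same route as the paper, whose proof is simply the one-line remark that the result is ``immediate'' from the orthonormality of $\{\psi^{l}_{k}\}$; you have filled in exactly the details that remark suppresses (reduction of $d^{l}_{k}$ to the coefficients of $w$, Parseval on $V_0^{\perp}$, and Tonelli for the expectation identity). Your observation that orthonormality alone would only give Bessel's inequality, and that the completeness $\overline{\bigoplus_{k} W_k} = V_0^{\perp}$ from Assumption~1 is what upgrades it to equality, is a worthwhile precision the paper leaves implicit.
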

\begin{proof}
This is immediate from the fact that $\{ \{ \psi^{l}_{k} \}_{k =
  1}^{M_l} \}_{l = 0}^{\infty}$ are orthonormal.
\end{proof}

The implication of the previous theorem is that the orthogonal signal
$w(\bx,\omega)$ can be determined exactly from the projection
coefficients in the space $V_0^{\perp}$.

We now study the effect of replacing the finite dimensional signal
$v_M(\bx,\omega)$ with the full KL expansion of the signal. In many
cases the infinite dimensional signal will be provide a more informative and useful model, including 
for
KL expansions of Gaussian random fields.  Nevertheless, in practical situations computational limitations among others will limit the
dimensionality of the eigenspace $V_{0}$ to a finite
level $M$. The detector coefficients $d^l_k$ will be affected by
the magnitude of the truncation of the KL expansion. However, as $M$
increases the error due to the truncation will rapidly decay as the
sum of the remaining eigenvalues.

\begin{theorem}
Let $t_{M} := \sum_{j\geq M +1} \lambda_j$ and suppose that
$u(\bx,\omega) = v(\bx,\omega) + w(\bx,\omega) $ for some
$w(\bx,\omega) \in L^{2}_{\bbP}(\Omega;L^{2}(U))$, with $w(\bx,\omega)
\perp V_{0}$ almost surely.  Then
\[
\| w(\bx,\omega)\|^{2}_{ L^{2}_{\bbP}(\Omega;L^{2}(U)) } ( 1 - 2
t_M
) 
+
t_M
\leq
\sum_{l \in \bbN_0} 
\sum_{k=1}^{M_l}
\eset{(d^{l}_{k})^{2}} 
\leq 
\| w(\bx,\omega)\|^{2}_{ L^{2}_{\bbP}(\Omega;L^{2}(U)) }
( 1 + 2
t_M
) 
+
t_M.
\]
\label{mls:theo3}
\end{theorem}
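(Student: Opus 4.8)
The plan is to expand each projection coefficient into a part carried by $v$ and a part carried by $w$, and then control the resulting diagonal and cross sums separately. Since every $\psi^l_k \in W_l \subset V_0^{\perp}$, linearity of the $L^{2}(U)$ inner product gives
\[
d^l_k(\omega) = \int_U v\,\psi^l_k\,\mbox{d}\bx + \int_U w\,\psi^l_k\,\mbox{d}\bx =: a^l_k(\omega) + b^l_k(\omega),
\]
so that $\eset{(d^l_k)^2} = \eset{(a^l_k)^2} + 2\,\eset{a^l_k b^l_k} + \eset{(b^l_k)^2}$. Summing over $l \in \bbN_0$ and $k = 1,\dots,M_l$ reduces the statement to estimating three sums.

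The two diagonal sums I would compute exactly. For the $w$-part, since $w \perp V_0$ almost surely, the preceding theorem (applied with $w$ in the role of the orthogonal component) yields $\sum_{l,k}\eset{(b^l_k)^2} = \|w\|^{2}_{L^{2}_{\bbP}(\Omega;L^{2}(U))}$. For the $v$-part, Proposition \ref{mls:prop1} forces $\int_U \phi_p\psi^l_k\,\mbox{d}\bx = 0$ whenever $p \le M$, so only the KL tail survives: $a^l_k = \sum_{p \ge M+1}\lambda_p^{1/2}Y_p\int_U \phi_p\psi^l_k\,\mbox{d}\bx$. Using $\eset{Y_iY_j}=\delta_{ij}$, then Parseval over the orthonormal system $\{\psi^l_k\}$ (which spans $V_0^{\perp}$), together with the fact that each $\phi_p$ with $p \ge M+1$ is a unit vector lying in $V_0^{\perp}$, I would get $\sum_{l,k}\eset{(a^l_k)^2} = \sum_{p\ge M+1}\lambda_p = t_M$.

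The crux is the cross sum $\sum_{l,k}\eset{a^l_k b^l_k}$. The key observation is that, because every $\psi^l_k$ lies in $V_0^{\perp}$, Parseval collapses this sum into a single inner product $\eset{\langle P_{V_0^{\perp}}v,\,w\rangle_{L^2(U)}}$, where $P_{V_0^{\perp}}v = \sum_{p\ge M+1}\lambda_p^{1/2}Y_p\phi_p$ has $L^{2}_{\bbP}(\Omega;L^{2}(U))$-norm exactly $\sqrt{t_M}$. Applying Cauchy--Schwarz first in $L^{2}(U)$ and then in $\Omega$ (equivalently, Cauchy--Schwarz in the product Hilbert space $\ell^2 \otimes L^{2}_{\bbP}(\Omega)$ in which the families $\{a^l_k\}$ and $\{b^l_k\}$ sit) bounds the magnitude of the cross sum by $\sqrt{t_M}\,\|w\|_{L^{2}_{\bbP}(\Omega;L^{2}(U))}$. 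Assembling the three estimates gives $\sum_{l,k}\eset{(d^l_k)^2} = \|w\|^{2} + t_M \pm 2\,(\text{cross})$, from which the two-sided envelope of the statement is obtained once the cross contribution is estimated.

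I expect this last step to be the main obstacle. The cross term is the only place where the probabilistic structure beyond the identity $\eset{Y_iY_j}=\delta_{ij}$ enters: it encodes the correlation between the novel signal $w$ and the truncated tail $\{Y_p\}_{p>M}$ of the KL expansion of $v$. Controlling it to the precision claimed is precisely where care is needed, since the raw Cauchy--Schwarz bound produces a factor $\sqrt{t_M}\,\|w\|$ rather than the multiplicative $t_M\|w\|^2$ appearing in the stated constants. A natural sufficient hypothesis is that $w$ be uncorrelated with the tail modes, in which case $\eset{\langle P_{V_0^{\perp}}v,\,w\rangle}=0$, the cross term vanishes, and the estimate collapses to the clean identity $\sum_{l,k}\eset{(d^l_k)^2} = \|w\|^{2} + t_M$, comfortably inside the stated bounds; absent such decorrelation one must track the cross term carefully, and this is where I would concentrate the work.
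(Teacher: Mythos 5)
Your decomposition is, in substance, exactly the paper's own proof: the paper projects $u$ onto $V_0^{\perp}$, writes $Pu = \sum_{p \geq M+1} \lambda_p^{1/2} \phi_p Y_p + w$, and expands the squared Bochner norm into the tail energy, the energy of $w$, and a cross term (its display \eqref{mls:eqn1}); after Parseval these are precisely your three sums $\sum \eset{(a^l_k)^2}$, $\sum \eset{(b^l_k)^2}$ and $\sum \eset{a^l_k b^l_k}$. Your evaluation of the two diagonal sums, giving $t_M$ and $\| w\|^{2}_{L^{2}_{\bbP}(\Omega;L^{2}(U))}$, is correct and matches the paper's \eqref{mls:eqn2} and its earlier global-detector theorem.

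The divergence is in the cross term, and there your assessment is right while the paper's own proof is not. The paper bounds each mode separately, $|\eset{\int_U \phi_p Y_p w \, \mbox{d}\bx}| \leq \|w\|_{L^{2}_{\bbP}(\Omega;L^{2}(U))}$ (its \eqref{mls:eqn3}), and then asserts the conclusion; but feeding that estimate into \eqref{mls:eqn1} bounds the cross term only by $2\big(\sum_{p \geq M+1} \lambda_p^{1/2}\big)\|w\|$, which is neither $2\,t_M \|w\|^2$ nor even finite in general (for $\lambda_p \sim p^{-2}$ the sum of $\lambda_p^{1/2}$ diverges while $t_M$ is finite). Your single Cauchy--Schwarz in the Bochner space gives the sharper bound $2\sqrt{t_M}\,\|w\|$, and the obstacle you flag is genuine: the stated constants cannot be reached because the theorem is false as written. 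Indeed, take $w = \lambda_{M+1}^{1/2}\phi_{M+1}Y_{M+1}$, which is orthogonal to $V_0$ almost surely; then $\sum_{l,k}\eset{(d^l_k)^2} = \|Pu\|^2 = 4\lambda_{M+1} + \sum_{p \geq M+2}\lambda_p = 3\lambda_{M+1} + t_M$, while the claimed upper bound is $\lambda_{M+1}(1+2t_M) + t_M$, so the inequality forces $1 \leq t_M$ --- violated whenever the eigenvalue tail is small, which is exactly the regime of interest. What your argument does establish (modulo writing out the Tonelli/dominated-convergence justification for swapping expectations and infinite sums) is the correct two-sided envelope
\[
\left( \|w\|_{L^{2}_{\bbP}(\Omega;L^{2}(U))} - \sqrt{t_M} \right)^2
\;\leq\;
\sum_{l \in \bbN_0}\sum_{k=1}^{M_l} \eset{(d^l_k)^2}
\;\leq\;
\left( \|w\|_{L^{2}_{\bbP}(\Omega;L^{2}(U))} + \sqrt{t_M} \right)^2,
\]
and your closing remark is the right reading of it: if $w$ is uncorrelated with the tail modes $\{Y_p\}_{p > M}$, the cross term vanishes and one gets the identity $\|w\|^2 + t_M$ exactly. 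So do not hunt for a missing idea to recover the factors $1 \pm 2t_M$; the productive fix is to restate the theorem with the $\sqrt{t_M}$ constants (or under the decorrelation hypothesis), at which point your proof goes through completely.
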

\begin{proof}
Let $P:L^{2}(U)\rightarrow V_0^{\perp}$ be the orthogonal
projection. 
Since 
\[
u(\bx,\omega) = v_{M}(\bx,\omega) + 
\sum_{p \geq M+1} \lambda^{\frac{1}{2}}_{p}
\phi_p(\bx) Y_{p}(\omega)
+
w(\bx,\omega),
\]
it follows
\[
Pu(\bx,\omega) = 
\sum_{p \geq M+1} \lambda^{\frac{1}{2}}_{p} \phi_p(\bx) Y_{p}(\omega)
+
w(\bx,\omega).
\]
Given that $\{ \{ \psi^{l}_{k} \}_{k = 1}^{M_l} \}_{l = 0}^{\infty}$ 
forms an orthonormal basis for $V_0^{\perp}$, we have
\begin{equation}
\begin{split}
\sum_{l \in \bbN_0} 
\sum_{k=1}^{M_l}
\eset{d^{l}_{k}}^{2} 
& = 
\| Pu(\bx,\omega)\|^{2}_{ L^{2}_{\bbP}(\Omega;L^{2}(U)) }
=
\| \sum_{p \geq M + 1} \lambda^{\frac{1}{2}}_{p} \phi_p(\bx) Y_{p}(\omega)
+ w(\bx,\omega)\|^{2}_{ L^{2}_{\bbP}(\Omega;L^{2}(U)) } \\
&=
\| \sum_{p \geq M+1} \lambda^{\frac{1}{2}}_{p} \phi_p(\bx) Y_{p}(\omega)
\|^{2}_{ L^{2}_{\bbP}(\Omega;L^{2}(U)) }
+
\| 
w(\bx,\omega)\|^{2}_{ L^{2}_{\bbP}(\Omega;L^{2}(U)) } \\
&+
2 
\eset{\int_U \Big(\sum_{p \geq M+1} \lambda^{\frac{1}{2}}_{p} \phi_p(\bx) Y_{p}(\omega)
\Big) w(\bx,\omega)\, \mbox{d}\bx},
\end{split}
\label{mls:eqn1}
\end{equation}

\begin{equation}
\| \sum_{p \geq M + 1} \lambda^{\frac{1}{2}}_{p} \phi_p(\bx) Y_{p}(\omega)
\|^{2}_{L^{2}_{\bbP}(\Omega;L^{2}(U)) } 
= t_M.
\label{mls:eqn2}
\end{equation}
From Cauchy-Schwartz (both with respect to the probability and the
Lesbegue measure) we have
\begin{equation}
\begin{split}
\left| \eset{ \int_U 
\phi_p(\bx) 
Y_{p}(\omega)
w(\bx,\omega)\, \mbox{d}\bx }
\right|
&\leq
\int_U 
\| \phi_p(\bx) 
Y_{p}(\omega)\|_{L^{2}_{\bbP}(\Omega)}
\| w(\bx,\omega)
\|_{L^{2}_{\bbP}(\Omega)}
\, \mbox{d}\bx \\
&=
\int_U  |\phi_p(\bx)|
\| w(\bx,\omega)
\|_{L^{2}_{\bbP}(\Omega)} \mbox{d}\bx \\
&\leq \|\phi_p(\bx)\|_{L^{2}(U)} 
\| w(\bx,\omega)
\|_{L^{2}_{\bbP}(\Omega;L^{2}(U)) }  \\
&=
\| w(\bx,\omega)
\|_{L^{2}_{\bbP}(\Omega;L^{2}(U)) }. 
\end{split}
\label{mls:eqn3}
\end{equation}
Inserting equations \eqref{mls:eqn3} and \eqref{mls:eqn2} in
\eqref{mls:eqn1} we reach the conclusion.
\end{proof}

\begin{rem}
The previous theorem provided a mechanism to determine the
intensity of the orthogonal signal $w(\bx,\omega)$ given
the size of the truncation parameter $M$.  Thus the larger $M$
is, depending on the decay of $\lambda_p$, $p = M+1,\dots$, the
size of the coefficients $d^l_k$ can be used to determine more
precisely the size of the perturbation $w(\bx,\omega)$ both in the
local and global sense.
\end{rem}

\begin{theorem}
Suppose that a signal $u(\bx,\omega) = 
v_{M}(\bx,\omega) + w(\bx,\omega)
$  for $v_M\in V_0$, and for some $w(\bx,\omega) \in L^{2}_{\bbP}(\Omega;L^{2}(U))$
then
\[
Pu(\bx,\omega)=Pw(\bx,\omega) = \sum_{l \in \bbN_0} 
\sum_{k=1}^{M_l}
d^{l}_{k}(\omega) \psi^{l}_{k}(\bx)  
\,\emph{and}\,
\sum_{l \in \bbN_0} 
\sum_{k=1}^{M_l}
\eset{(d^{l}_{k})^{2}} 
=
\| Pw(\bx,\omega)\|^{2}_{ L^{2}_{\bbP}(\Omega;L^{2}(U)) },
\]
where $P:L^{2}(U)\rightarrow V_0^{\perp}$ is the orthogonal
projection. 
\end{theorem}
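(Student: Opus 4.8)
The plan is to exploit two structural facts already in place: that any element of $V_0$ is annihilated by the orthogonal projection $P$ onto $V_0^{\perp}$, and that the family $\{\psi^l_k\}$ is a complete orthonormal system for $V_0^{\perp}$. First I would note that since $v_M(\bx,\omega) \in V_0$ almost surely and $P$ projects onto $V_0^{\perp}$, we have $Pv_M(\bx,\omega) = 0$ a.s.; by linearity, $Pu = Pv_M + Pw = Pw$ almost surely, which already gives the first claimed equality $Pu(\bx,\omega) = Pw(\bx,\omega)$.

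Next I would identify the coefficients with the Fourier coefficients of $Pw$. By Proposition \ref{mls:prop1} every $\psi^l_k \in W_l$ is orthogonal to $V_0$, so $\int_U v_M \psi^l_k\,\mbox{d}\bx = 0$ a.s. Hence $d^l_k(\omega) = \int_U u\,\psi^l_k\,\mbox{d}\bx = \int_U w\,\psi^l_k\,\mbox{d}\bx = \langle Pw,\psi^l_k\rangle$, where the last step uses $\psi^l_k \in V_0^{\perp}$ so that replacing $w$ by its projection $Pw$ leaves the pairing unchanged. Because $Pw(\cdot,\omega) \in V_0^{\perp}$ and $\{\psi^l_k\}_{l\in\bbN_0,\,1\le k\le M_l}$ is a complete orthonormal basis of $V_0^{\perp}$ — this follows from Assumption \ref{mls:assum1}, which forces $V_0^{\perp} = \overline{\bigoplus_l W_l}$, together with Assumption \ref{mls:assum2}, which supplies orthonormal bases of each $W_l$ — the abstract Fourier expansion gives $Pw(\bx,\omega) = \sum_{l\in\bbN_0}\sum_{k=1}^{M_l} d^l_k(\omega)\psi^l_k(\bx)$ for almost every $\omega$, completing the first display.

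For the norm identity I would invoke Parseval's theorem in $V_0^{\perp}$: for a.e.\ fixed $\omega$, completeness of the orthonormal system yields $\|Pw(\cdot,\omega)\|^2_{L^2(U)} = \sum_{l,k}(d^l_k(\omega))^2$. Integrating against $\bbP$ over $\Omega$ and interchanging the sum with the expectation then gives $\|Pw\|^2_{L^2_{\bbP}(\Omega;L^2(U))} = \sum_{l,k}\eset{(d^l_k)^2}$, as claimed.

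The argument is essentially bookkeeping, so I do not anticipate a deep obstacle. The two points that demand genuine care are verifying that $\{\psi^l_k\}$ is actually \emph{complete} in $V_0^{\perp}$ (not merely orthonormal), which is exactly what the closure condition in Assumption \ref{mls:assum1} guarantees, and justifying the exchange of the infinite double sum over $(l,k)$ with the expectation. The latter is legitimate because every summand $\eset{(d^l_k)^2}$ is nonnegative, so the interchange follows from the monotone convergence theorem (equivalently, Tonelli's theorem applied to the product of $\bbP$ and counting measure).
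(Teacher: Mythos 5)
Your proof is correct and fills in precisely the standard argument the paper has in mind when it declares the result ``Immediate'': kill $v_M$ with $P$, identify $d^l_k$ as the Fourier coefficients of $Pw$ in the complete orthonormal system $\{\psi^l_k\}$ of $V_0^{\perp}$, and apply Parseval plus Tonelli. Your added care about completeness (via Assumption \ref{mls:assum1}) and the sum--expectation interchange is exactly the right bookkeeping, not a different route.
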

\begin{proof}
Immediate
\end{proof}

\begin{rem}
The sharpness of the bound in Theorem \ref{mls:theo3} depends
on the decay of the coefficients $\lambda_{M+1}, \lambda_{M+2}, \dots$.
From Theorem \ref{intro:theo1} the decay of the eigenvalues is
related to the smoothness of the realization $v(\cdot,\omega) \in H^{p}(U)$, where $p \in \bbN_0$, from which we have $t_M \leq C \sum_{k=M+1}^{\infty}
k^{-2p}$, with $C$ independent of $k$ and $p$. For $p>0$ we have that
$t_M \approx (M+1)^{-2p}$.
\end{rem}

\subsection{Global - local detector}

Suppose that $\{\chi^l_k\}_{k \in \mcK(l)}$ is a finite disjoint
partition at level $l$ of the domain $U$ such that
\[
U = \bigcup_{k \in \mcK(l)} \chi^l_k,
\]
where $\mcK(l)$ is an index set corresponding to each element in the
the partition. For each $l \in \bbN_{0}$ let $U^l := \{ \chi^l_k \}_{k
  \in \mcK(l)}$ and assume that $U^{l+1}$ is refinement of $U^{l}$.
We make the assumption that the support of any basis function
$\psi^{l}_{k} \in W_l$ is given by a union of sets in $U^l$. In
Section \ref{CMLS} the construction of the finite dimensional spaces
$V_0, \dots V_n$ and $W_0, \dots W_{n-1}$ with compactly supported
basis functions will be described in detail.

Suppose that $\tilde U \subset U$ and let
\[
\mcC (\tilde U) := \{(k,l) ;\,\, l \in \bbN_0,k \in \mcK(l),
\,\mbox{supp}\, 
\tilde U \cap \mbox{supp}\, \psi^l_k \neq \emptyset  
\},
\]
where by $\mbox{supp}\,\tilde U$ we mean the union of all sets in $\tilde U$.
It is clear that any projection coefficient $d^l_k = 0$ (see eq.  (\ref{mls:eqn0})) if $(l,k)
\notin \mcC(\tilde U)$. Thus the coefficients that correspond to the
intersection of the domain $\tilde U$ and the support of the basis functions $\{
\{ \psi^{l}_{k} \}_{k = 1}^{M_l} \}_{l \in \bbN_0}$ are sufficient to
detect any changes on $\tilde U$. The following example
explores this property in more detail.

\begin{exam}
Let $v_M(x,\omega) 
= 1 + Y_{1}(\omega)  \left(
    \frac{\sqrt{\pi}L}{2} \right)^{\frac{1}{2}}
    + \sum_{k = 2}^{M}
  \lambda_{k}^{\frac{1}{2}}
  \phi_{k}(x)Y_{k}(\omega)$
  be a stochastic process defined 
  on the domain  $[0,\tau]$ 
where  
\[
\phi_{k}(x) : = \left\{
\begin{array}{cc}
  \sin \left( \frac{\lfloor \frac{k}{2} \rfloor \pi x}{L_{p}} \right) &
  \mbox{if $k$ is even}\\ \cos \left( \frac{\lfloor \frac{k}{2} \rfloor \pi
      x}{L_{p}} \right) & \mbox{if $k$ is odd}\\
\end{array}
\right. 
\]
are orthogonal and 
\[
\sqrt{\lambda_k} := (\sqrt{\pi} L)^{\frac{1}{2}} \exp \left( 
-\frac{(\lfloor \frac{k}{2}
\rfloor \pi L )^{2}}{8} 
\right).
\]
The random variables $Y_1, \dots, Y_M$ are assumed to be uniform in
$[-\sqrt{3},\sqrt{3}]$ and independently identically distributed.  In
\cite{nobile_tempone_08} the authors show that $v_M(x,\omega)$ is the
truncation of the infinite dimensional random field $v$ with the
covariance:
\begin{equation}
\mbox{Cov}(v(x,\omega), v(y,\omega)) =  \exp \left( - \frac{(x - y)^2}{L^2_c}
\right),
\label{mls:eqn4}
\end{equation}
where $L_p = \max \{ \tau, 2L_c \} $ is the length correlation and $L
= L_c / L_p$.  Suppose that $\tau = 1$ and $L_c = 0.01$, so that $L_p = 1$
and $L = 0.01$. Furthermore, let $u(x,\omega) = v_M(x,\omega) + w(x)$
where $\tilde U = [0.3,0.7]$, $w(x) = 1_{\tilde U} 0.05 \exp \left( -
\frac{(x - x_s)^2}{\sigma^2} \right)$, $x_s = 0.5$ and $\sigma =
10^{-3/2}$.  The compactly supported \emph{multilevel basis} (MB) for $V_{0}
\oplus W_0 \oplus W_1 \oplus \dots$ is constructed such that $V_{0}$
is the span of $\{\phi_1,\dots,\phi_M\}$. The objective now is to
detect the presence of the smooth Gaussian function $w(x)$ given the
signal $u(x,\omega)$.

In Figure \ref{mls:fig2} (a) the signals $u(x,\omega)$,
$v_M(x,\omega)$ (solid and dashed lines with left vertical axis) and
$w(x)$ (solid orange line with right vertical axis) are plotted.  The
deformation $w(x)$ is added to the baseline stochastic process
$v_M(x,\omega)$ to obtain $u(x,\omega)$.  In Figure \ref{mls:fig2}
(b), (c) and (d) the projection coefficients of the signal
$u(x,\omega)$ onto the multilevel spaces $W_{n-1}$, $W_{n-2}$ and
$W_{n-3}$, for $n = 6$, are plotted. Notice that around the center of
$w(x)$ at $x_s = 0.5$ the projection coefficients are clearly
non-zero. Thus the local presence of the non-zero coefficients
indicated that around $x_s = 0.5$ the usual behavior of the signal
$v_M(x,\omega)$ changes. Furthermore, from Theorem \ref{mls:theo3}
we
can conclude that
\[
Pw(\bx) = \sum_{(k,l) \in \mcC(\tilde U)} 
d^{l}_{k} \psi^{l}_{k}(x)  
\,\mbox{and}\,
\sum_{(k,l) \in \mcC(\tilde U)} 
(d^{l}_{k})^{2}
=
\| Pw(x)\|^{2}_{L^{2}(U)},
\]
where $P$ is as in Theorem 3.4.
\label{mls:example1}
\end{exam}
\begin{figure}
    \centering

    \begin{tikzpicture}
    \node at (0,0)
    {
        \includegraphics[scale = 0.45,
        trim = 3.5cm 7.4cm 2.75cm 2.75cm,
        clip=true]{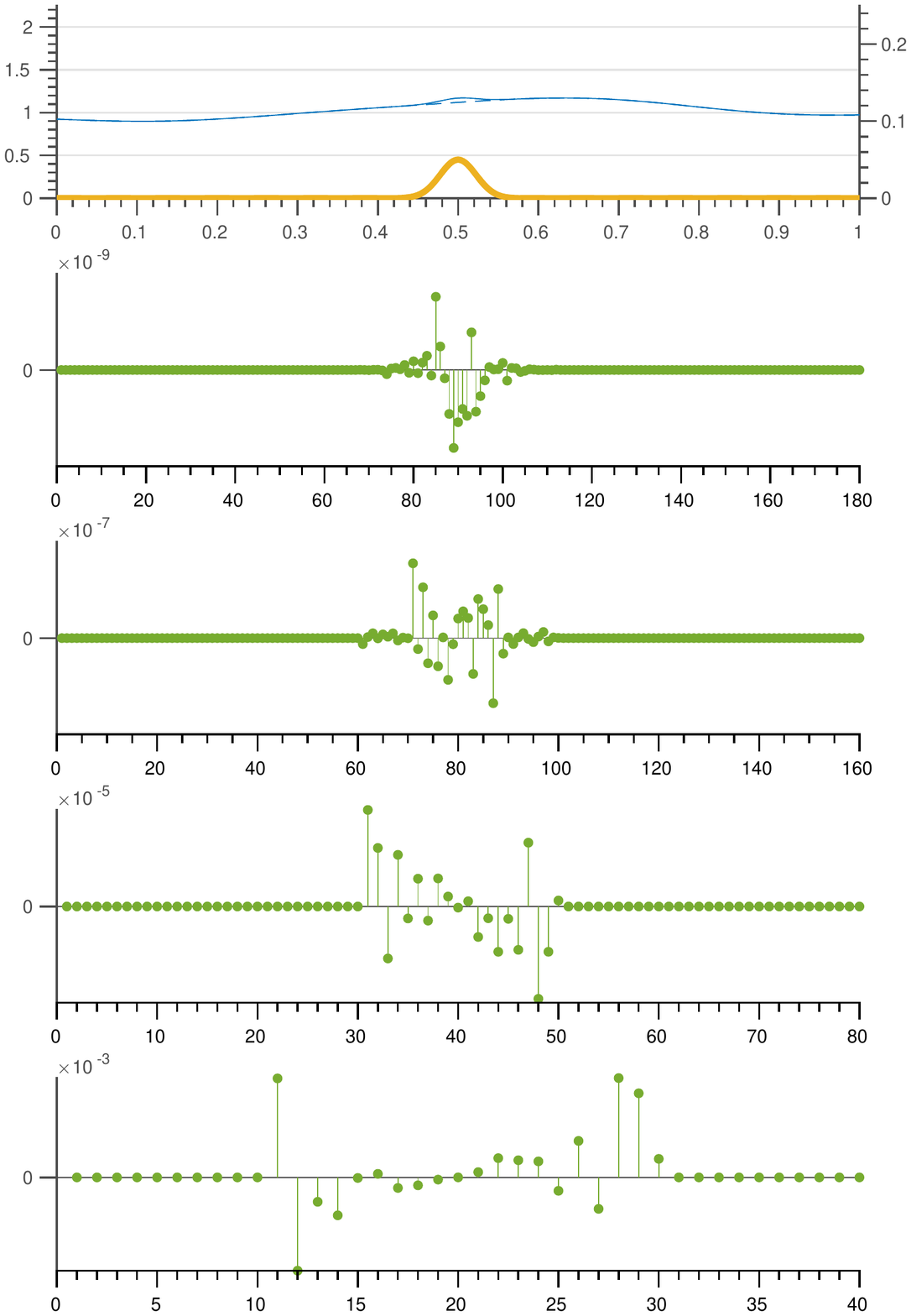}
    };

    \node at (0,2) {\tiny $(a)$};
    \node at (0,0) {\tiny $(b)$};
    \node at (0,-2.1) {\tiny $(c)$};
    \node at (0,-4.2) {\tiny $(d)$};

    \node at (-1.75,3.5) {\tiny $u(x,\omega)$};  
    \node at (0.35,3.7) {\tiny $v_{M}(x,\omega)$};  
    \node at (0.5,2.7) {\tiny $w(x)$};
    
    \node at (2,1.5) {$W_{n-1}$};
    \node at (2,-0.55) {$W_{n-2}$};
    \node at (2,-2.65) {$W_{n-3}$};
    
    \end{tikzpicture}
    
    \caption{Example of the functional analysis approach to change
      detection. (a) The original signal $v_M(x,\omega)$ (dash line
      with left vertical axis) with the Gaussian bump $w(x)$ (solid
      orange line with right vertical axis) with support on the domain
      $\tilde U := [0.3,0.7]$. Adding these two signals give
      $u(x,\omega)$ (solid line with left vertical axis).  The
      multilevel basis is constructed such that $W_k$, $k \in \bbN_0$,
      is orthogonal to the signal $v_M(x,\omega)$.  (b), (c) and (d)
      Detection of $w(x)$ at levels $W_{n-1}$, $W_{n-2}$ and
      $W_{n-3}$, where $n = 6$.  }
    \label{mls:fig2}
\end{figure}
\begin{rem}
In practice we assume that $u(\bx,\omega)$, $v_M(\bx,\omega)$ and
$w(\bx,\omega)$ belong in the finite dimensional space $V_n = V_0
\oplus W_0 \oplus \dots \oplus W_{n-1}$ for some finite fixed $n \in
\bbN_0$. For example, $V_n$ can be the span of disjoint characteristic
functions (Haar basis) over the domain $U$.  This is a reasonable
assumption since data are collected as samples. In Example
\ref{mls:example1} the signal is formed from 500 equally spaced
samples of $v_M(x,\omega)$ from $[0,1]$, and we have chosen $n = 6$ in $V_n$ above.
\end{rem}

\begin{exam} The multilevel approach can also be applied to non-KL expansions,
such as the following.
Let $v_M(x,\omega) = 1 + Y_{1}(\omega) \left( \frac{\sqrt{\pi}L}{2}
\right)^{1/2} + \sum_{k = 2}^{M} \sqrt{\mu_{k}}
\phi_{k}(x)Y_{k}(\omega)$ be a stochastic process, where $x \in
    [0,1]$, with $\mu_k$ and $\phi_k(x)$ defined as in Example
    \ref{mls:example1}. However, we fix $L_p = 1/4$ and $L = 1/4$. Let
    $\tilde U = [0.3,0.7]$ with $w(x) = 1_{\tilde U} 0.5 \exp \left( -
    \frac{(x - x_s)^2}{\sigma^2} \right)$, $x_s = 0.5$ and $\sigma =
    10^{-3/2}$. Note that this example does not necessarily have the
    covariance structure as shown in Equation \eqref{mls:eqn4}.
    However, these coefficients lead to a more oscillatory structure
    for $v_{M}(x,\omega)$. It is hard for the observer to distinguish
    $w(x)$ in $u(x,\omega)$ from $v_{M}(x,\omega)$ without knowledge
    of the location.  By building the multilevel spaces adapted to
    $v_M(x,\omega)$ the filter coefficients for levels $W_{n-1}$,
    $W_{n-2}$ and $W_{n-3}$ easily detect the location of $w(x)$ (See
    Figure \ref{mls:fig3}).
 \label{mls:example2}
\end{exam}

\begin{figure}
    \centering

            \begin{tikzpicture}
    \node at (0,0)
    {
        \includegraphics[scale = 0.45,
        trim = 3.5cm 7.4cm 2.75cm 2.75cm,
        clip=true]{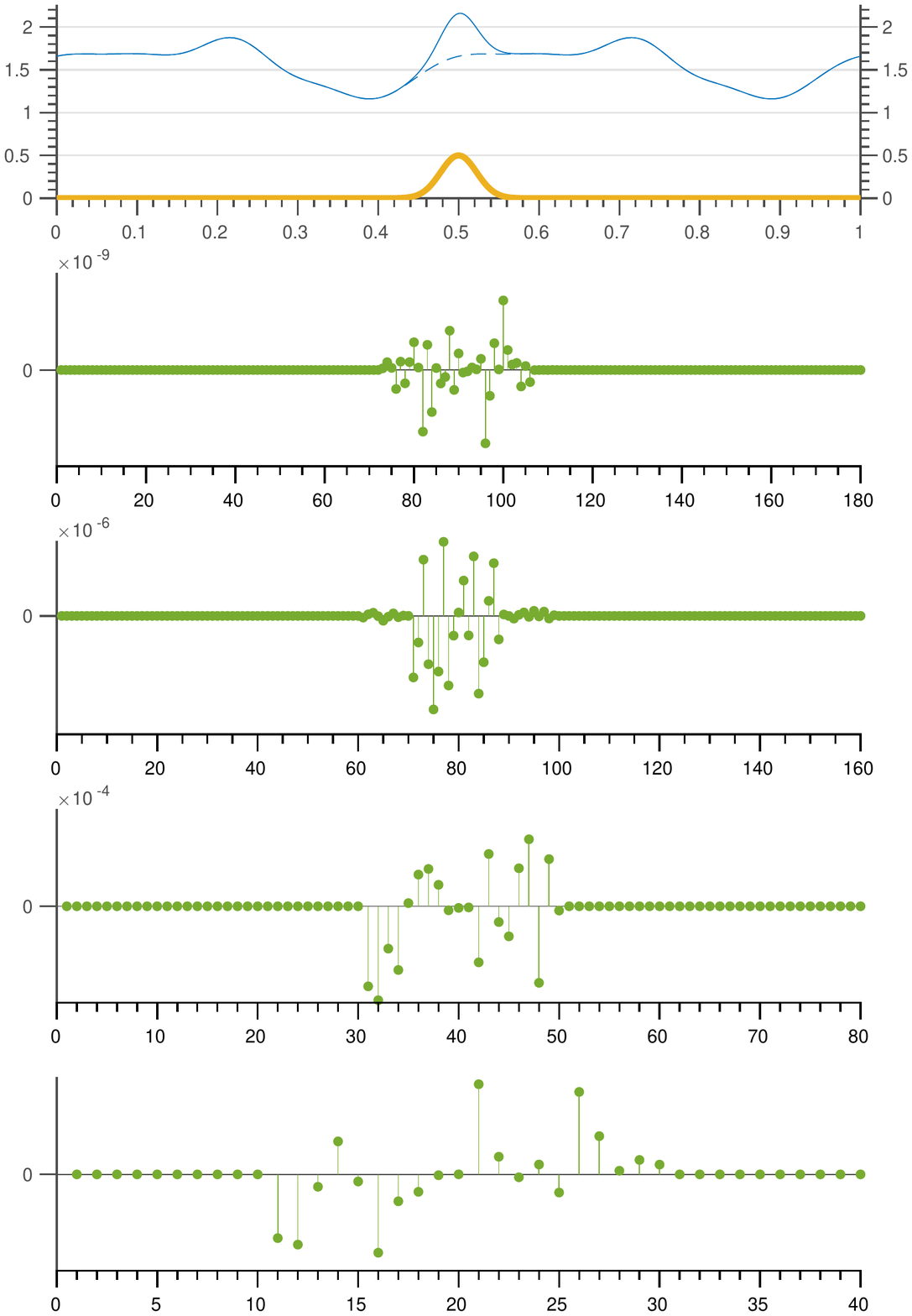}
    };
    
    \node at (-1.85,3.5) {\tiny $u(x,\omega)$};  
    \node at (0.4,3.5) {\tiny $v_{M}(x,\omega)$};  
    \node at (0.5,2.7) {\tiny $w(x)$};
    
    \node at (2,1.5) {$W_{n-1}$};
    \node at (2,-0.45) {$W_{n-2}$};
    \node at (2,-2.65) {$W_{n-3}$};
    
    \end{tikzpicture}

    \caption{Example of change detection for $v_{M}(x,\omega)$ with $L
      = L_p = 1/4$.  It is hard for the observer to distinguish $w(x)$
      in $u(x,\omega)$ from $v_{M}(x,\omega)$ without knowledge of the
      location.  By building the multilevel spaces adapted to
      $v_M(x,\omega)$ the filter coefficients for levels $W_{n-1}$,
      $W_{n-2}$ and $W_{n-3}$ easily detect the location of $w(x)$.  }
    \label{mls:fig3}
\end{figure}

\section{Construction of multilevel orthogonal eigenspace}
\label{CMLS}

For many practical problems the domain $U$ will be restricted to some
form of a mesh.  The multilevel basis (MB) of the finite dimensional spaces $V_n = V_0
\oplus W_0 \oplus \dots \oplus W_{n-1}$ for a finite fixed $n \in
\bbN_0$ can be constructed on this mesh. The construction of the MB
for problems in $\R^{3}$ within the context of polynomials and integral
operators was first proposed in \cite{Tausch2001}. In
\cite{Castrillon2016a} this was modified for discrete domains in the
context of Kriging  and high dimensional problems from
the work in \cite{Castrillon2020}.

\begin{defi}~
Suppose that $\mcT$ is a collection of $N$ simplices in $\R^{d}$.
Then $\mcT$ is a $k$-simplicial complex if the following properties are satisfied

    \begin{enumerate}[i)]
        \item Every face of a simplex in $\mcT$ is also
        in $\mcT$. 

        \item The non-empty intersection of any two simplices $\tau_1,\tau_2 \in \mcT$
        is a face of both $\tau_1$ and $\tau_2$. 
    
        \item The highest dimension of any simplex in $\mcT$ is $k \leq d$.

\end{enumerate}

\end{defi}

This definition allows us to construct many general domains in
$\R^{d}$ formed by $k$-simplices. For example, in $\R^{3}$ we can
build a triangulation of a surface with $2$-simplices.

\begin{defi}
  Let $\bx_i$ be the barycenter of any simplex $\tau_i \in
  \mcT$, and $\bbS := \{\bx_1,\dots,\bx_N\}$.
\end{defi}

\begin{asum} We make the following assumptions for $\mcT$ on the domain $U \subset \R^{d}$:

    \begin{enumerate}[i)]

        \item $\mcT$ contains $N$ simplices $\tau_i$, for $i = 1,\dots,N$, of the same order.
        
        \item $U = \cup_{\tau_i \in \mcT} \tau_i $.

        \item 
        For any $i = 1,\dots,N$ and for any simplex $\tau_i \in \mcT$
        let $\chi_{i} = c_i 1_{\tau_i}$. The coefficients $c_i$ for $i
        = 1, \dots, N$ are chosen such that $\chi_1, \dots, \chi_N$
        form an orthonormal set in $L^{2}(U)$.
    
        \item Let $\mcE:=\{\chi_{1},\dots,\chi_{N}\}$ and $V_n =
          \mcP(\mcE) := span \{\chi_1, \dots, \chi_N \}$. Assume that
          Karhunen-Lo\`{e}ve eigenfunctions $\phi_i \in \mcP(\mcE)$
          for all $i = 1,\dots,M$ where $N > M$.
    
    \end{enumerate}  
\label{mls:assum3}
\end{asum}

 With the goal of constructing a multi-level basis, the domain $U$ is
 in general embedded in a kd-tree type decomposition.  This will allow
 the MB construction algorithm to efficiently access the simplices of
 $\mcT$ by searching a binary tree. The approach is described in
 \cite{Castrillon2020} in the context of discrete vector spaces.  For
 very high dimensional domains alternative choices also include Random
 Projection (RP) trees \cite{Dasgupta2008}.

Suppose that all the barycenters $\bx \in \bbS$ are embedded in the
cell $B^{0}_{0} \subset \R^d$, which corresponds to the top of the
binary tree.  Without loss of generality it can be assumed that
$B^{0}_{0} = [0,1]^{d}$ and $\bbS \subset B^{0}_0$.  Each cell
$B^{l}_{k}$ at the level $l$ of the tree and index $k$ contains a
subset of the barycenters in $\bbS$.  There is a subdivision of any cell
$B^{l}_{k}$ into two cells $B^{l-1}_{{\tt left}}$ and $B^{l-1}_{\tt right}$
according to the following rule (see Algorithm
\ref{RPMLB:algorithm2-kd}):

\begin{enumerate}[1)]

\item For each coordinate $1 \leq j \leq d$, project every barycenter $\bx_i \in 
B^{l}_{k}$ onto  the unit vector along coordinate $k$ and compute the sample variance of these projection coefficients.

\item Choose the unit coordinate vector $v$ in the direction $1\leq j \leq d$ with the maximal 
sample variance for the above projection coefficients.

\item Compute the median of the projections along $v$ and split the cell in two at this coordinate position.
  
\end{enumerate}

\begin{algorithm}[h]
  \KwIn{ $\tilde \bbS$}
  \KwOut{Rule, threshold, $v$}
\Begin{
    choose a coordinate direction that has maximal variance of the projection
    of the points in $\tilde \bbS$. \\
Rule(x) := $x \cdot v  \leq$ threshold = median
}

\caption{ChooseRule($\tilde \bbS$) function for kD-tree where $\tilde
  \bbS \subset \bbS$.}
\label{RPMLB:algorithm2-kd}
\end{algorithm}

The initial cell $B^{0}_{0}$ is subdivided in this manner until a
maximum number of $n_0$ barycenters are located at each of the leaf
cells.  It is also assumed that $n_0 > M$, with $M$ the above number of
truncated KL coefficients.
Let $\mcB^{l}$ be the collection of all the cells $B^{l}_{k}$
at level $l$.  Algorithm \ref{RPMLB:algorithm1} below describes in more
detail the construction of the binary tree. In Figure \ref{MLRLE:fig1}
an example of the kd-Tree partitioning of a triangulation $\mcT$ is
shown along with the associated binary tree.

\setlength{\tabcolsep}{36pt}
\tikzset{edge from parent/.style={draw,edge from parent path={(\tikzparentnode.south)-- +(0,-8pt)-| (\tikzchildnode)}}}

\begin{figure*}[htb]
\begin{center}
\begin{tabular}{c c }
\begin{tikzpicture}[scale=.65]

\begin{scope}
[place/.style={circle,draw=rufous,fill=rufous,thick, inner sep=0pt,minimum size=1mm}]
\begin{scope}[xshift = 2cm, yshift = 3.5cm]
\draw (0,0) to (0.5,0.8660);
\draw (0,0) to (-0.5,0.8660);
\draw (0,0) to (-1,0);
\draw (0,0) to (-0.5,-0.8660);
\draw (0,0) to (0.5,-0.8660);
\draw (0.5,0.8660) to (-0.5,0.8660); 
\draw (-0.5,0.8660) to (-1,0);
\draw (-1,0) to (-0.5,-0.8660);
\draw (-0.5,-0.8660) to (0.5,-0.8660);

\node at (0,0.5774) [place] {};
\node at (-0.50,0.2887) [place] {};
\node at (-0.50,-0.2887) [place] {};
\node at (0,-0.5774) [place] {};
\end{scope}

\begin{scope}[xshift = 2cm, yshift = 1.25cm, rotate = 90]
\draw (0,0) to (0.5,0.8660);
\draw (0,0) to (-0.5,0.8660);
\draw (0,0) to (-1,0);
\draw (0,0) to (-0.5,-0.8660);
\draw (0,0) to (0.5,-0.8660);
\draw (0.5,0.8660) to (-0.5,0.8660); 
\draw (-0.5,0.8660) to (-1,0);
\draw (-1,0) to (-0.5,-0.8660);
\draw (-0.5,-0.8660) to (0.5,-0.8660);

\node at (0,0.5774) [place] {};
\node at (-0.50,0.2887) [place] {};
\node at (-0.50,-0.2887) [place] {};
\node at (0,-0.5774) [place] {};
\end{scope}

\begin{scope}[xshift = 2cm, yshift = 7cm]
\draw (0,0) to (0.5,0.8660);
\draw (0,0) to (-0.5,0.8660);
\draw (0,0) to (-1,0);
\draw (0,0) to (-0.5,-0.8660);
\draw (0,0) to (0.5,-0.8660);
\draw (0.5,0.8660) to (-0.5,0.8660); 
\draw (-0.5,0.8660) to (-1,0);
\draw (-1,0) to (-0.5,-0.8660);
\draw (-0.5,-0.8660) to (0.5,-0.8660);

\node at (0,0.5774) [place] {};
\node at (-0.50,0.2887) [place] {};
\node at (-0.50,-0.2887) [place] {};
\node at (0,-0.5774) [place] {};
\end{scope}

\begin{scope}[xshift = 3cm, yshift = 7cm]
\draw (0,0) to (0.5,0.8660);
\draw (0,0) to (-0.5,0.8660);
\draw (0,0) to (-1,0);
\draw (0,0) to (-0.5,-0.8660);
\draw (0,0) to (0.5,-0.8660);
\draw (0.5,0.8660) to (-0.5,0.8660); 
\draw (-0.5,0.8660) to (-1,0);
\draw (-1,0) to (-0.5,-0.8660);
\draw (-0.5,-0.8660) to (0.5,-0.8660);

\node at (0,0.5774) [place] {};
\node at (-0.50,0.2887) [place] {};
\node at (-0.50,-0.2887) [place] {};
\node at (0,-0.5774) [place] {};
\end{scope}

\begin{scope}[xshift = 4cm, yshift = 7cm]
\draw (0,0) to (0.5,0.8660);
\draw (0,0) to (-0.5,0.8660);
\draw (0,0) to (-1,0);
\draw (0,0) to (-0.5,-0.8660);
\draw (0,0) to (0.5,-0.8660);
\draw (0.5,0.8660) to (-0.5,0.8660); 
\draw (-0.5,0.8660) to (-1,0);
\draw (-1,0) to (-0.5,-0.8660);
\draw (-0.5,-0.8660) to (0.5,-0.8660);

\node at (0,0.5774) [place] {};
\node at (-0.50,0.2887) [place] {};
\node at (-0.50,-0.2887) [place] {};
\node at (0,-0.5774) [place] {};
\end{scope}

\begin{scope}[xshift = 6.95cm, yshift = 5.01cm]
\draw (0,0) to (0.5,0.8660);
\draw (0,0) to (-0.5,0.8660);
\draw (0,0) to (-1,0);
\draw (0,0) to (-0.5,-0.8660);
\draw (0,0) to (0.5,-0.8660);
\draw (0.5,0.8660) to (-0.5,0.8660); 
\draw (-0.5,0.8660) to (-1,0);
\draw (-1,0) to (-0.5,-0.8660);
\draw (-0.5,-0.8660) to (0.5,-0.8660);

\node at (0,0.5774) [place] {};
\node at (-0.50,0.2887) [place] {};
\node at (-0.50,-0.2887) [place] {};
\node at (0,-0.5774) [place] {};
\end{scope}

\begin{scope}[xshift = 6.95cm, yshift = 3.28cm, rotate = 180]
\draw (0,0) to (0.5,0.8660);
\draw (0,0) to (-0.5,0.8660);
\draw (0,0) to (-1,0);
\draw (0,0) to (-0.5,-0.8660);
\draw (0,0) to (0.5,-0.8660);
\draw (0.5,0.8660) to (-0.5,0.8660); 
\draw (-0.5,0.8660) to (-1,0);
\draw (-1,0) to (-0.5,-0.8660);
\draw (-0.5,-0.8660) to (0.5,-0.8660);

\node at (0,0.5774) [place] {};
\node at (-0.50,0.2887) [place] {};
\node at (-0.50,-0.2887) [place] {};
\node at (0,-0.5774) [place] {};
\end{scope}

\begin{scope}[xshift = 5.95cm, yshift = 3.28cm, rotate = 180]
\draw (0,0) to (0.5,0.8660);
\draw (0,0) to (-0.5,0.8660);
\draw (0,0) to (-1,0);
\draw (0,0) to (-0.5,-0.8660);
\draw (0,0) to (0.5,-0.8660);
\draw (0.5,0.8660) to (-0.5,0.8660); 
\draw (-0.5,0.8660) to (-1,0);
\draw (-1,0) to (-0.5,-0.8660);
\draw (-0.5,-0.8660) to (0.5,-0.8660);

\node at (0,0.5774) [place] {};
\node at (-0.50,0.2887) [place] {};
\node at (-0.50,-0.2887) [place] {};
\node at (0,-0.5774) [place] {};
\end{scope}

    \node at (7.5,8.5) [] {$B^{0}_0$};
    \node at (1,5.5) [] {$B^{3}_{7}$};
    \node at (2.75,5.5) [] {$B^{3}_{8}$};
    
 \draw[step=8,gray,very thin] (0, 0) grid (8, 8);
    \draw (3.25,0) to (3.25,8);
    
    \draw (0,5) to (3.25,5);
    \draw (2.2,5) to (2.2,8);
    \draw (0,2) to (3.25,2);

    \draw (0,5) to (3.25,5);

    \draw (3.25,4.15) to (8,4.15);
    \draw (5,4.15) to (5,8);
    \draw (6.75,0) to (6.75,4.15);
    
  \end{scope}

\end{tikzpicture} 
&

\raisebox{0.5cm}{
\begin{tikzpicture}[scale=0.85]
\begin{scope}[xshift=5cm, yshift=5cm,
place/.style={circle,draw=blue!50,fill=blue!20,thick,
      inner sep=0pt,minimum size=1.5mm},
placer/.style={circle,draw=darkgreen!50,
  preaction={fill=olivine,opacity = 0.5}, thick,inner
  sep=0pt,minimum size=1.5mm}, ]


  
\Tree [.\node[placer]{$B^{0}_{0}$}; 
             [.\node[placer]{$B^{1}_{1}$};
                    [.\node[placer]{$B^{2}_{3}$}; 
                           [.\node[placer]{$B^{3}_{7}$};]
                           [.\node[placer]{$B^{3}_{8}$};] 
                    ]       
                    [.\node[placer]{$B^{2}_{4}$}; 
                           [.\node[placer]{$B^{3}_{9}$};] 
                           [.\node[placer]{$B^{3}_{10}$};] 
                    ] 
             ]                                        
             [.\node[placer]{$B^{1}_{2}$};
                    [      [.\node[placer]{$B^{2}_{5}$};
                                  [.\node[placer]{$B^{3}_{11}$};] 
                                  [.\node[placer]{$B^{3}_{12}$};] 
                           ]
                           [.\node[placer]{$B^{2}_{6}$}; 
                                  [.\node[placer]{$B^{3}_{13}$};] 
                                  [.\node[placer]{$B^{3}_{14}$};] 
                           ] 
                                          ]]
]


\end{scope}
\end{tikzpicture}
}
\end{tabular}
\end{center}
\caption{Multilevel kd-tree domain decomposition of a 
triangulation $\mcT$ with respective binary tree. Assume that the
tree has $l = 0,\dots,n$ levels.}
\label{MLRLE:fig1}
\end{figure*}
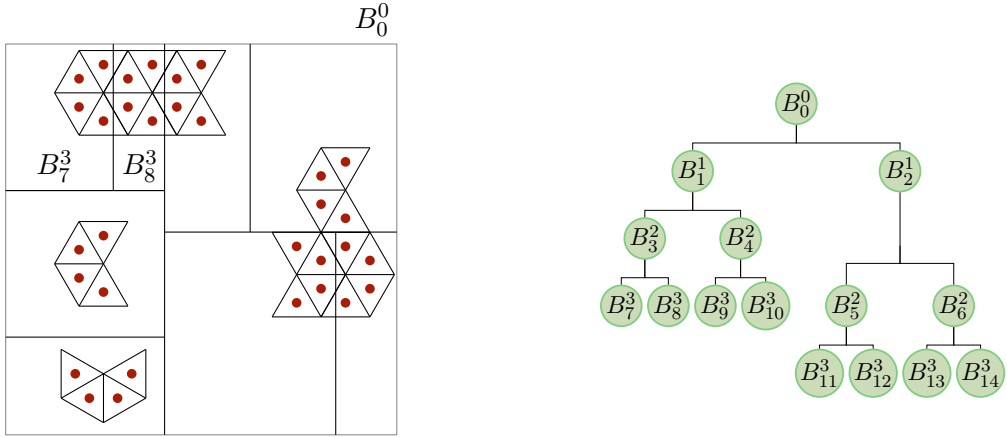

\setlength{\tabcolsep}{36pt}

\begin{algorithm}[h]
  \KwIn{ $\bbS$, node, $n_0$} \KwOut{Tree}

\Begin{

\eIf {Tree = root}{node $\leftarrow$ 0, currentdepth $\leftarrow$ 0
Tree $\leftarrow$ MakeTree($\bbS$, node,
currentdepth + 1, $n_0$)
}
{

Tree.node = node

Tree.currentdepth = currentdepth - 1

node $\leftarrow$ node + 1

\If {$|\tilde \bbS| < n_0$}{return (Leaf)}

(Rule, threshold, $v$) $\leftarrow$ ChooseRule($\tilde \bbS$)

(Tree.LeftTree, node) 
$\leftarrow$ MakeTree($\bx \in \tilde \bbS$: Rule($\bx$) = True, node,
currentdepth + 1, $n_0$)

(Tree.RightTree, node)
$\leftarrow$ MakeTree($\bx \in \tilde \bbS$: Rule($\bx$) = false,
node, currentdepth + 1, $n_0$)

Tree.threshold = threshold\\
Tree.$v$ = $v$
}
}
\caption{MakeTree($\bbS$) function}
\label{RPMLB:algorithm1}
\end{algorithm}
\corb{
\begin{rem}
Note that if the number of barycenters is even then the
tree will end evenly at some level $n$. However, if
the number is odd and $n_0 = 1$ then one branch can end at level $n$ and the other at level $n - 1$.
\end{rem}
}
From Assumption \ref{mls:assum3} the set of barycenter locations $\{
\bx_{1},\dots,\bx_{N}\}$ have a one to one correspondence with $\mcE$,
i.e. $\bx_{t} \longleftrightarrow \chi_{t}$ for all $t = 1, \dots,
N$. The multilevel basis construction algorithm adapted to the
Karhunen Lo\`{e}ve expansion $v_{M}(\bx,\omega)$ is described as
follows:

\begin{enumerate}[(I)]
\item Start at the finest level of the tree,
  i.e. $q = n$.
\item For each leaf cell $B^{q}_{k} \in \mcB^{q}$ assume without loss
  of generality that there are $s$ barycenters $\bbS^{q}:=\{ \bx_1,
  \dots, \bx_s \}$ with associated functions $\mcE_k^{q} := \{ \chi_1,
  \dots, \chi_s \}$. Let $\mcQ^{q}_{k}(\mcE_k^{q})$ be the span of the
  functions in $\mcE_k^{q}$.
\begin{enumerate}[i)]

\item Let $\bphi^{q,k} _{j} := \sum_{\chi_i \in \mcE^q_k} 
c^{q,k} _{i,j}
  \chi_i, \hspace{2mm} j=1, \dots, a_{q,k};
\hspace{2mm} \bpsi^{q,k}_{j} := \sum_{\chi_i \in \mcE^q_k} 
d^{q,k}_{i,j}
\chi_i, \hspace{2mm} j=a_{q,k}+1, \dots, s$, where $c^{q,k}_{i,j}$,
$d^{q,k}_{i,j} \in \mathbb{R}$, and are undetermined at the moment, 
and for some yet undetermined $a_{q,k} \in
\mathbb{N}_0$. The objective of the above linear combinations is to
construct new functions $\bpsi^{q,k}_{j}$ orthogonal to $V_0$,
i.e., such that for all $\phi_i \in V_0$, $i = 1, \dots,M$, we have that
\begin{equation}
\int_{U}
\phi_i(\bx)  \bpsi^{q,k}_{j}(\bx) \, \mbox{d} \bx 
= 0.
\label{hbconstruction:eqn1}
\end{equation}


\item From the eigenfunctions $\phi_1, \dots, \phi_M$ of the KL expansion and
  $\mcE^{q}_{k}$ we can form the \corb{matrices}



\corb{ 
\[
\bM^{q,k} := 
\begin{bmatrix}
 \langle \phi_{1}(\bx),  \chi_{1}(\bx)\rangle  & 
\dots & 
 \langle \phi_{1}(\bx),  \chi_{s}(\bx)\rangle   \\
 \langle \phi_{2}(\bx),  \chi_{1}(\bx)\rangle  & 
\dots & 
 \langle \phi_{2}(\bx),  \chi_{s}(\bx)\rangle   \\
\vdots & \ddots & \vdots \\
 \langle \phi_{M}(\bx),  \chi_{1}(\bx)\rangle  & 
\dots & 
\langle \phi_{M}(\bx) , \chi_{s}(\bx)\rangle   \\
\end{bmatrix}
\]
and
\[
\bN^{q,k} := 
\left[
\begin{array}{c | c}
\begin{matrix}
 \langle \phi_{1},  \bphi^{q,k}_{1} \rangle  & 
\dots & 
 \langle \phi_{1},  \bphi^{q,k}_{a_{q,k}} \rangle   \\
\vdots & \ddots & \vdots \\
 \langle \phi_{M},  \bphi^{q,k}_{1} \rangle  & 
\dots & 
\langle \phi_{M} , \bphi^{q,k}_{a_{q,k}} \rangle   \\
\end{matrix}
&
\begin{matrix}
 \langle \phi_{1},  \bpsi^{q,k}_{a_{q,k} + 1} \rangle  & 
\dots & 
 \langle \phi_{1},  \bpsi^{q,k}_{s} \rangle   \\
\vdots & \ddots & \vdots \\
 \langle \phi_{M},  \bpsi^{q,k}_{a_{q,k} + 1} \rangle  & 
\dots & 
 \langle \phi_{M} , \bpsi^{q,k}_{s} \rangle   \\
\end{matrix}
\end{array}
\right]
\]
where $\langle\cdot,\cdot\rangle$ is the $L^{2}(U)$ inner product.}

\item Apply the Singular Value Decomposition (SVD) to $\bM^{q,k}$
  i.e.  $\bM^{q,k} = \bU \bD \bV$ where $\bU \in \R^{ M \times M}$,
  $\bD \in \R^{M \times s}$, and $\bV \in \R^{s \times s}$. Assume that $a_{q,k}$ is the rank of
  the matrix $\bM^{q,k}$.

\item Consider the following choice for the coefficients $c^{q,k}_{i,j}$ and
$d^{q,k}_{i,j}$ from the right SVD matrix:
\[
  \left[ \begin{array}{ccc|ccc}
      c^{q,k}_{0,1} & \dots &c^{q,k}_{a_{q,k},1} & d^{q,k}_{a_{q,k}+1,1} & \dots &d^{q,k}_{s,1} \\
      c^{q,k}_{0,2} & \dots &c^{q,k}_{a_{q,k},2} & d^{q,k}_{a_{q,k}+1,2} & \dots &d^{q,k}_{s,2} \\
      \vdots & \vdots & \vdots & \vdots & \vdots & \vdots   \\
      c^{q,k}_{0,s} & \dots &c^{q,k}_{a_{q,k},s} & d^{q,k}_{a_{q,k}+1,s} & \dots &d^{q,k}_{s,s}
    \end{array}
\right]
 := \bV\T .
\]
Since the vectors (column and row) in $\bV$ are orthonormal in an $l_2(\R^s)$ 
sense and
the functions in $\mcE^{q}_{k}$ are orthonormal in $L_2(U)$ then
$\bphi^{q,k}_{1}, \dots, \bphi^{q,k}_{{a}_{q,k}}$, and
$\bpsi^{q,k}_{a_{q,k}+1}, \dots,$ $\bpsi^{q,k}_{s}$ form an
orthonormal basis of $\mcQ^{q}_{k}(\mcE_k^{q})$.  \corb{As in 
\cite{Tausch2001, Castrillon2016a} it can be shown 
that this choice leads to $\bpsi^{q,k}_{a_{q,k}+1}, \dots, \bpsi^{q,k}_s$ satisfying equation \eqref{hbconstruction:eqn1}.
From the SVD of $\bM^{q,k}$ and this choice of coefficients
\[
\bN^{q,k} = \bM^{q,k} \bV = \bU \bD.
\]
Now, decompose $\bD$ as  $\bD = [ \bSigma \, |\, \0 ]$, where
$\bSigma \in \R^{M \times a_{q,k}}$ is a diagonal matrix with 
the non-zero singular values of $\bM^{q,k}$ and  the zero matrix $\0 \in  \R^{M \times (s - a_{q,k}) }$. Thus $\bU \bD = [\bU \bSigma\,|\,\0]$
and $\bN^{q,k} = \bM^{q,k} \bV = [\bU \bSigma\,|\,\0]$. It follows
that SVD columns $a_{q,k}+1,\dots,s$ of $\bV$ form an orthonormal 
basis of the nullspace of $\bM^{q,k}$ and therefore 
$\bpsi^{q,k}_{a_{q,k}+1}, \dots, \bpsi^{q,k}_s$ 
satisfy equation \eqref{hbconstruction:eqn1}, and are
orthogonal to $V_0$ and compactly supported in the cell $B^{q}_{k}$.
}

\item 
Denote by $D_k^{q} :=\{\bpsi^{q,k}_{a_{q,k}+1}, \dots, 
\bpsi^{q,k}_{s} \}$ and  $C_k^{q}:=\{ \bphi^{q,k}_{1}, 
\dots, \bphi^{q,k}_{a_{q,k}}\}$.

  \begin{rem}
    Notice that the functions
  $\bphi^{q,k}_{1}, \dots,$ $\bphi^{q,k}_{a_{q,k}}$
  are {\it not} in general
  orthogonal to $V_0$. 
  \end{rem}

\end{enumerate}

\item Let $\mcD^{q} := \cup_{B^{q}_{k} \in \mcB^{q} 
}D^{q}_k$ and $\mcC^{q} :=
  \cup_{B^{q}_{k} \in \mcB^{q}} 
  C^{q}_k$.  It is not hard to see that they
  form an orthonormal set in $L^{2}(U)$.  Denote by $W_{q-1}$ the
  span of all the functions in $\mcD^{q}$ and similarly $V_{q-1}$ with
  respect to $\mcC^{q}$.

\item The next step is to go to level $q - 1$. For any two sibling
  cells denoted as $B^{q}_{\tt{left}}$ and $B^{q}_{\tt{right}}$ at level $q$ denote 
  $\mcE^{q-1}_{\tilde k}$, for some index $\tilde k$, as the
  collection of functions $\bphi^{q,\tt{left}}_{1}, \dots,$
  $\bphi^{q,\tt{left}}_{a_{q,\tt{left}}}$ and $\bphi^{q,\tt{right}}_{1}, \dots,$
  $\bphi^{q,\tt{right}}_{a_{q,\tt{right}}}$

\item Recursively, let $q: = q - 1$. If $B^{q}_{k} \in \mcB^{q}$ is a leaf cell (which may occur as not all branches of the tree 
\corb{necessarily} have the same numbers of levels)
  then go to (II). However, if $B^{q}_{k} \in \mcB^{q}$ is not a leaf
  cell, then go to (II) but replace the collection of leaf cell
  functions with $\mcE^{q}_{k}:=\{ \bphi^{q+1,\tt{left}}_{1}, \dots,$
  $\bphi^{q+1,\tt{left}}_{a_{q+1,\tt{left}}}$, $\bphi^{q+1,\tt{right}}_{1}, \dots,$
  $\bphi^{q+1,\tt{right}}_{a_{q+1,\tt{right}}}$ $\}$.

  \item When $q = -1$ is reached then incrementation stops.
  

\end{enumerate}

When the algorithm terminates a series of orthogonal subspaces $W_{0},\dots,
W_{n}$ and corresponding basis functions $\mcD^{0}, \dots
\mcD^{t}$ are obtained. Furthermore, it can be shown that $V_0 =
\mbox{span} \{ \phi_1, \dots \phi_M\}$ is also the span of
$\{\bphi^{0,0}_{1}, \dots,$ $\bphi^{0,0}_{a_{0,0}}\}$ and $a_{0,0} =
M$.

\begin{rem} Following the arguments in \cite{Castrillon2013,Castrillon2020}
  it can be shown that

\begin{itemize}

\item $V_n = \mcP(\mcE) = V_0 \oplus W_{0} \oplus W_{1} \oplus \dots
  \oplus W_{n-1}$

\item $\mcC^{0}$, $\mcD^{0}$, $\mcD^{1}$, $\dots$  $\mcD^{n-1}$ form an
orthonormal basis for $V_0 \oplus
W_{0}
\oplus W_{1}
\oplus \dots \oplus W_{n-1}$

\item At most $\mcO(Nn)$ computational steps are needed to construct
  the multilevel basis of $V_n$.

\item Let $\gamma \in \mcP(\mcE)$ and denote $c_{1}, \dots c_{N}$ the
  orthogonal projection coefficients on $\mcP(\mcE)$ where
\[
c_{i} = \int_U \gamma(\bx) \chi_{i}(\bx) \,\mbox{d}\bx
\]
and $i = 1,\dots,N$.  It can be shown that the multilevel 
projection coefficients $d^{l}_{k}$ for $l = 0, \dots n-1$ can be 
computed in at most $\mcO(Nn)$ computational steps and memory
from $c_1,\dots,c_N$.
\end{itemize}

\end{rem}

\begin{rem}
For many practical situations only spatial samples of $u(\bx,\omega)$
are available. For such cases the alternative choice for $\mcE$ is a
set of unit vectors. A similar construction to the multilevel basis
can be done in a vector sense (see
\cite{Castrillon2020,Castrillon2013} for details). This is equivalent
to the continuous multilevel basis construction, up to a re-scaling of
the domain, by assuming that each simplex in $\mcT$ has the same unit
volume measure and $u(\bx,\omega)$ is constant on each simplex. The
multilevel coefficients from Examples \ref{mls:example1} and
\ref{mls:example2} where obtained with the discrete version of the
multilevel basis using 500 equally spaced samples.
\end{rem}

\begin{rem}
  The algorithm is efficiently implemented in MATLAB \cite{Matlab2018}
  and can handle highly complex geometries.  The code will be made
  available to the general public.
\end{rem}

\section{Spherical example}
\label{spherical}

We will now demonstrate the application of the multilevel orthogonal
eigenspace for the detection of signals on Spherical Fractional
Brownian Motion (SFBM) defined on the unit sphere $\bbS_2$ \cite{Istas2006}.
This is a more complex scenario that shows the flexibility of this
approach.

Suppose $\{P^m_l(x)\}_{l\geq 0}$ is the set of associated Legendre
polynomials for $m \geq 0$. If $m$ is negative then the associated
Legendre polynomials are given by

\[
P^{-m}_l(x) = (-1)^{m} \frac{(l-m)!}{(l+m)!} P^m_l(x).
\]

The coordinates of the unit sphere $\bbS_2$ are given by the
colatitude $\theta \in [0, \pi)$ and longitude $\varphi \in [0,2 \pi)$.
    The spherical harmonics $Y^m_l(\theta,\varphi)$ on $\bbS_2$ are
    defined by
 \[
  Y^m_l(\theta,\varphi)= \sqrt{ \frac{2l + 1}{4 \pi }
    \frac{(l-m)!}{(l+m)!}  } P^{m}_{l}(cos\, \theta)e^{im\varphi},
  \]
where $m = -l,\dots,l$.  In \cite{Istas2006} the author demonstrates
that the Karhunen-Lo\`{e}ve expansion of the SFBM is given by
\[
v(\theta,\varphi,\omega) = \sum_{l \geq 0} \sum_{m = -l}^{l} \sqrt{
  -\pi d_{l}} \varepsilon^{m}_{l}(\omega)(Y^m_l(\theta,\varphi) -
  Y^m_l(0,0))
  \]
  in the $L^{2}$ sense, where $\varepsilon^{m}_{l} \sim N(0,1)$
  i.i.d. and
  \[
  d_l := \int_{-1}^{1} \text {arccos}(x) P_l(x)\,\mbox{d}x.
  \]

  \begin{rem} Notice the spherical
    harmonics $Y^m_l(\theta,\varphi):[0,\pi] \times [0, 2\pi]
    \rightarrow \bbC$ are complex-valued, which however does not restrict this analysis.  Although the theoretical discussion of the
    multilevel orthogonal eigenspaces is given for real
    Hilbert spaces, the method can be readily extended to the complex
    case. In particular, the algorithm implementation can also handle
    this case.
    \end{rem}

  In Figure \ref{mls:fig3} three realizations from the KL expansion of
  the absolute value of the SFBM are shown. For this example $l = 10$,
  which is sufficient to capture much of the stochastic movement since
  it is shown in \cite{Istas2006} that $d_l$ decays as $l^{-2}$.
  However, since $d_l = 0$ whenever $l = 3, 5, 7, 9$ then the
  truncated KL expansion is reduced to $M = 56$ eigenfunctions.

\begin{figure}[htb]
    \centering
    \begin{tikzpicture}[scale = 0.90]
    \node at (-6,0)
    {
        \includegraphics[scale = 0.4,
        trim = 6cm 6.5cm 4cm 7cm,
        clip=true]{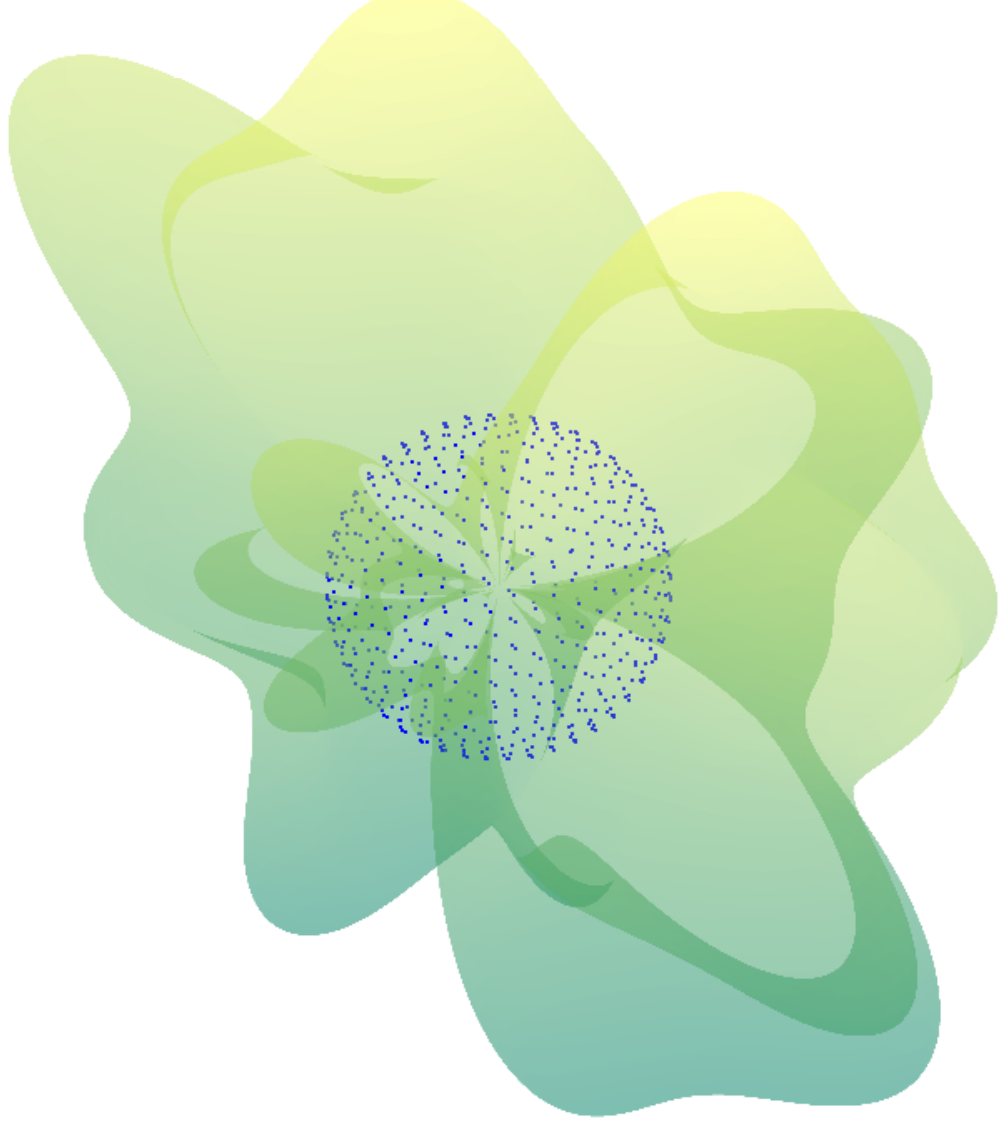}
    };
    
    \node at (0,0)
    {
        \includegraphics[scale = 0.4,
        trim = 5cm 6.5cm 1cm 7cm,
        clip=true]{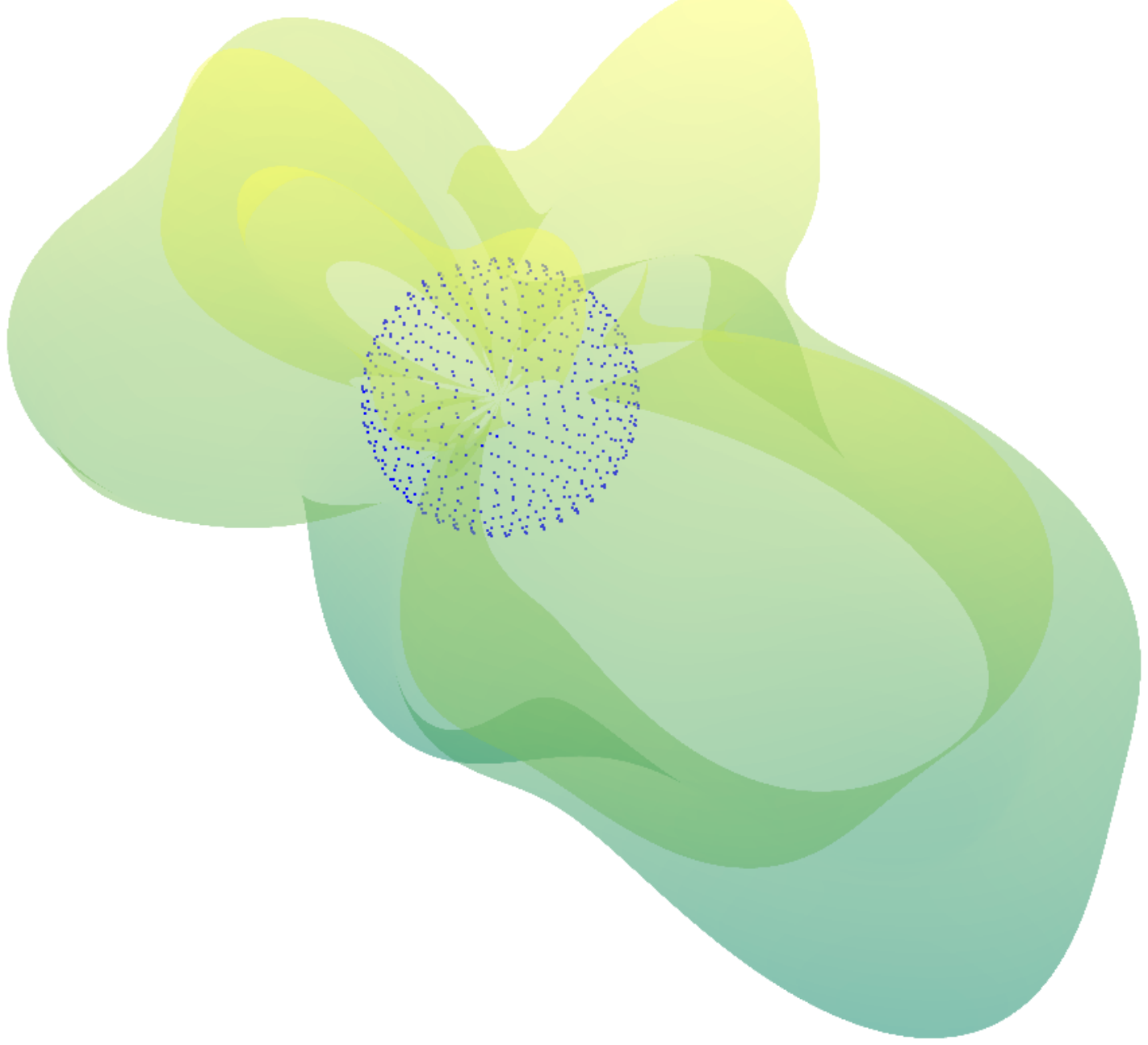}
    };
    
    \node at (6,0)
    {
        \includegraphics[scale = 0.4,
        trim = 3cm 6.5cm 5cm 7cm,
        clip=true]{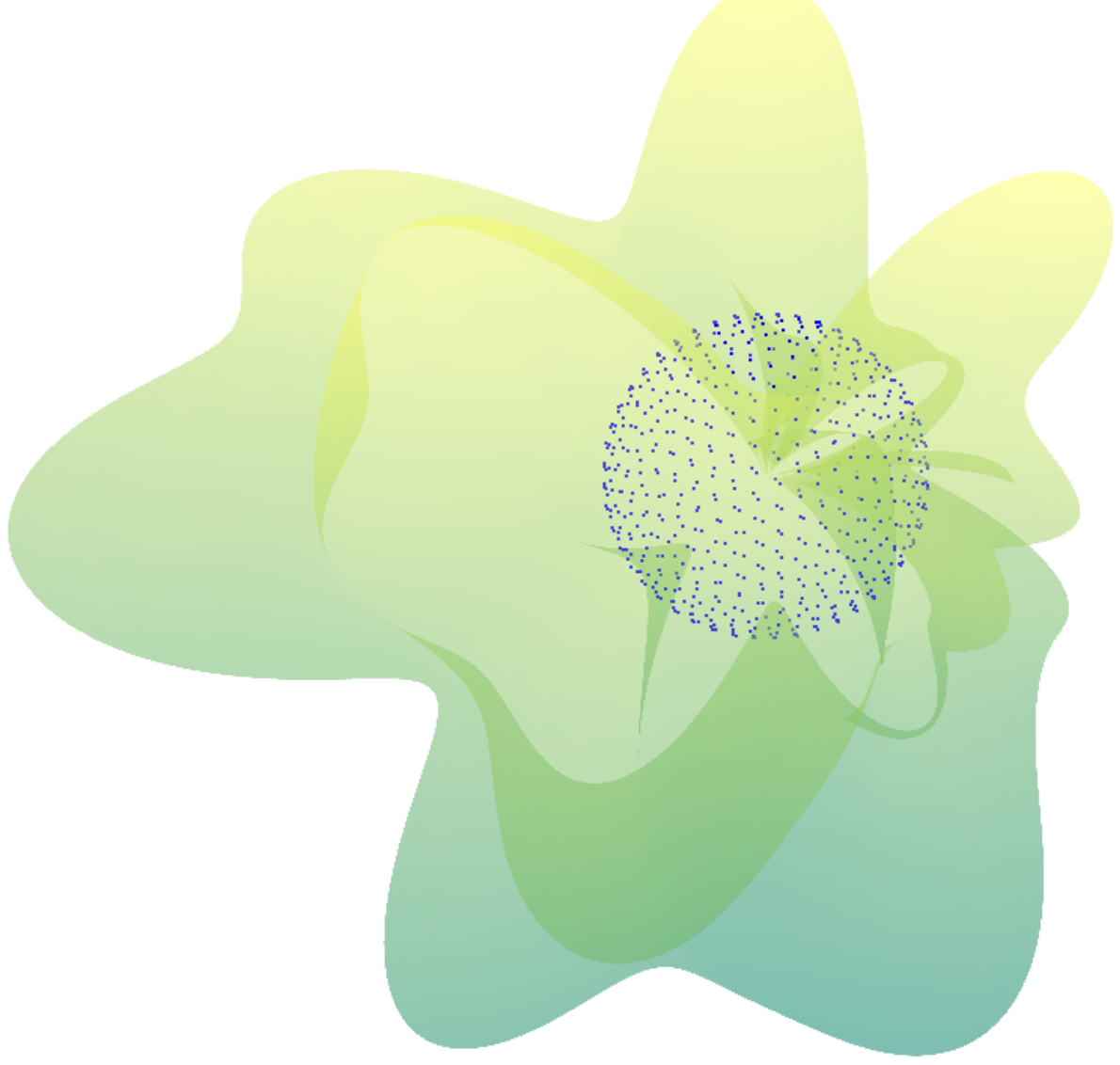}
    };

     \node at (-6,-4) { $v(\bx,\omega_1)$};
     \node at ( 0,-4) { $v(\bx,\omega_2)$};
     \node at ( 7,-4) { $v(\bx,\omega_3)$};
         
    \end{tikzpicture}
    \caption{Realizations of the Spherical Fraction Brownian Motion (SFBM) from
      $l = 10$  ($M = 56$ eigenfunctions) Karhunen Lo\`{e}ve expansion. The blue dots
  correspond to the sampling of the unit sphere.}
    \label{mls:fig4}
\end{figure}

  Suppose we apply a perturbation to $v(\theta,\varphi,\omega)$ of the
  form
  \[
  w(\theta,\varphi) = c \exp \left( - \frac{(\theta - \pi/2)^2 
    + (\varphi - \pi/2)^2  }{\sigma^{2}} \right)
  \]
  where $c = 0.5$ and $\sigma = 0.1$, i.e. $u(\theta,\varphi,\omega) =
  v + w$.  The goal is to detect $w$ with the ML orthogonal
  eigenspace. For this case we sample the sphere with 10,242 almost
  equally spaced barycenters \cite{Vanlaven2010} and construct the ML
  basis.

  After applying the ML to the spherical signal $u$, we analyze the
  projection coefficients corresponding to the spaces $W_{n-1}$,
  $W_{n-2}$ and $W_{n-3}$ where $n = 8$.  In Figure \ref{mls:fig5} the
  locations of the basis functions corresponding to the ML projection
  coefficients are shown for any coefficient with absolute value
  greater than $10^{-4}$. Over imposed with on each sphere is the
  Gaussian perturbation $w(\phi,\vartheta)$. Notice that these
  coefficients essential identify the location of the Gaussian from
  the original signal $u$ at different levels of resolution.
  Furthermore, we can use these coefficients to estimate the size of
  the perturbation from Theorem \ref{mls:theo3}.

\begin{figure}[htb]
    \centering
    \begin{tikzpicture}[scale = 0.90]
    \node at (-6,0)
    {
        \includegraphics[scale = 0.4,
        trim = 5cm 6.5cm 1cm 7cm,
        clip=true]{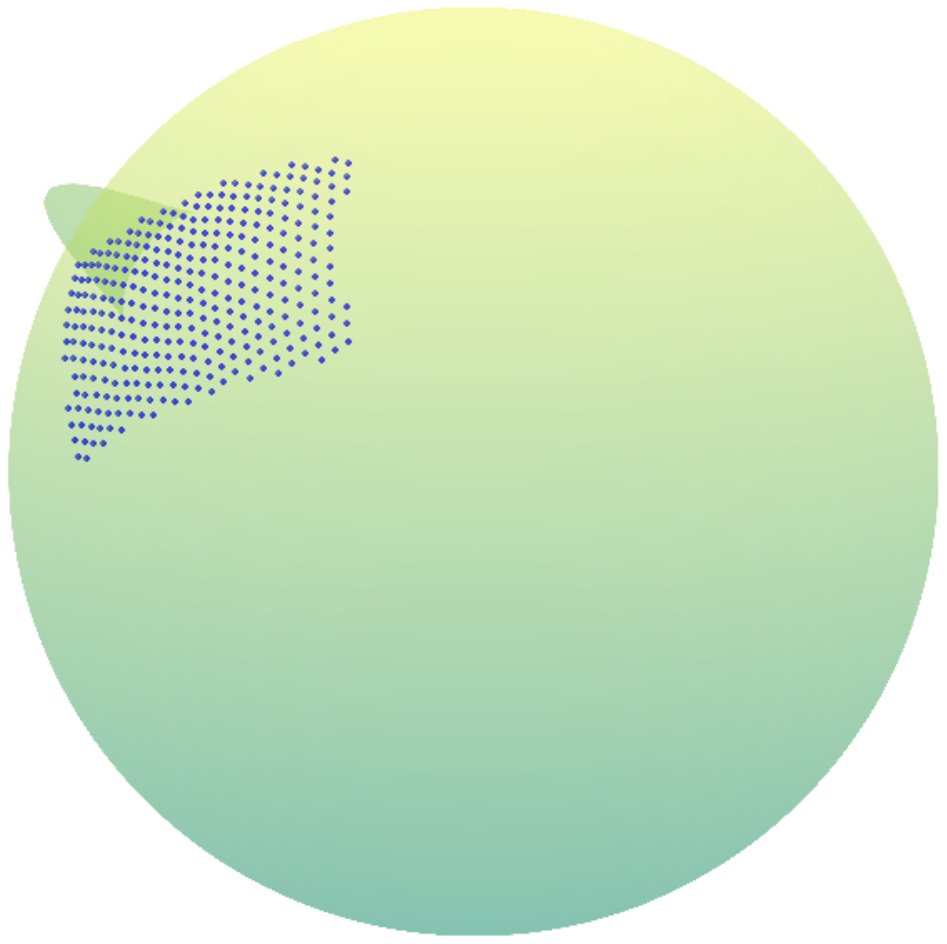}
    };
    
    \node at (0,0)
    {
        \includegraphics[scale = 0.4,
        trim = 5cm 6.5cm 1cm 7cm,
        clip=true]{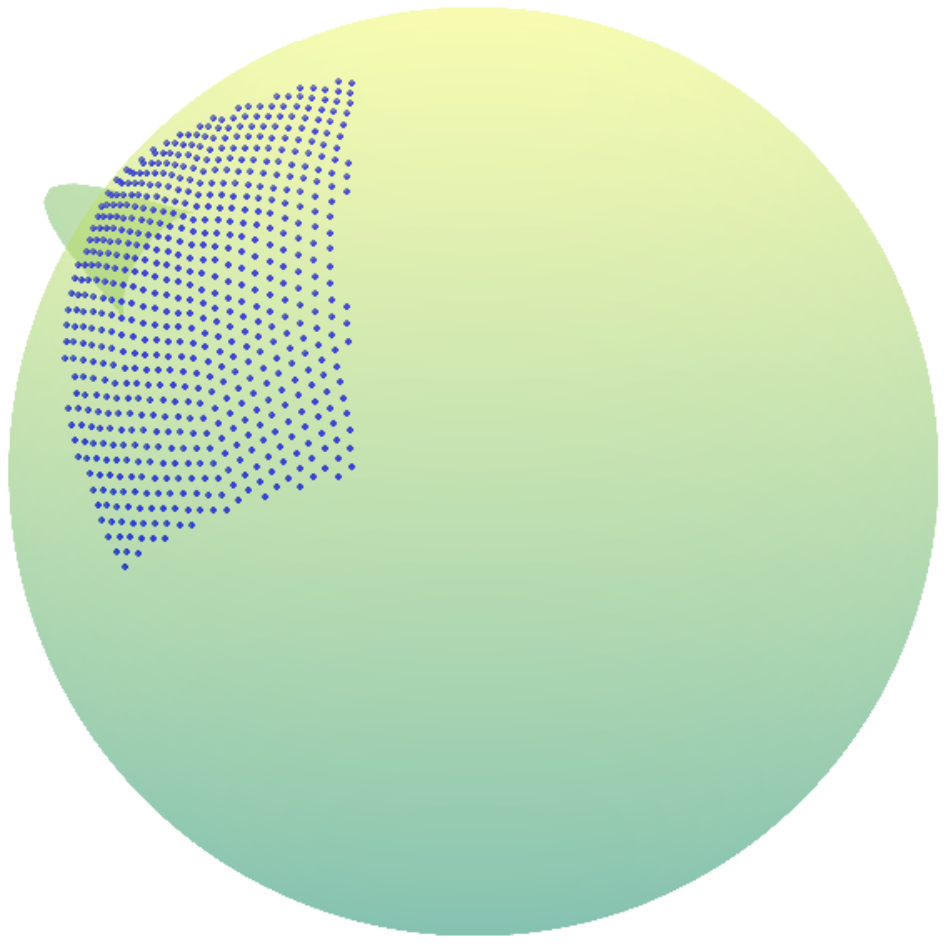}
    };
    
    \node at (6,0)
    {
        \includegraphics[scale = 0.4,
        trim = 5cm 6cm 1cm 6cm,
        clip=true]{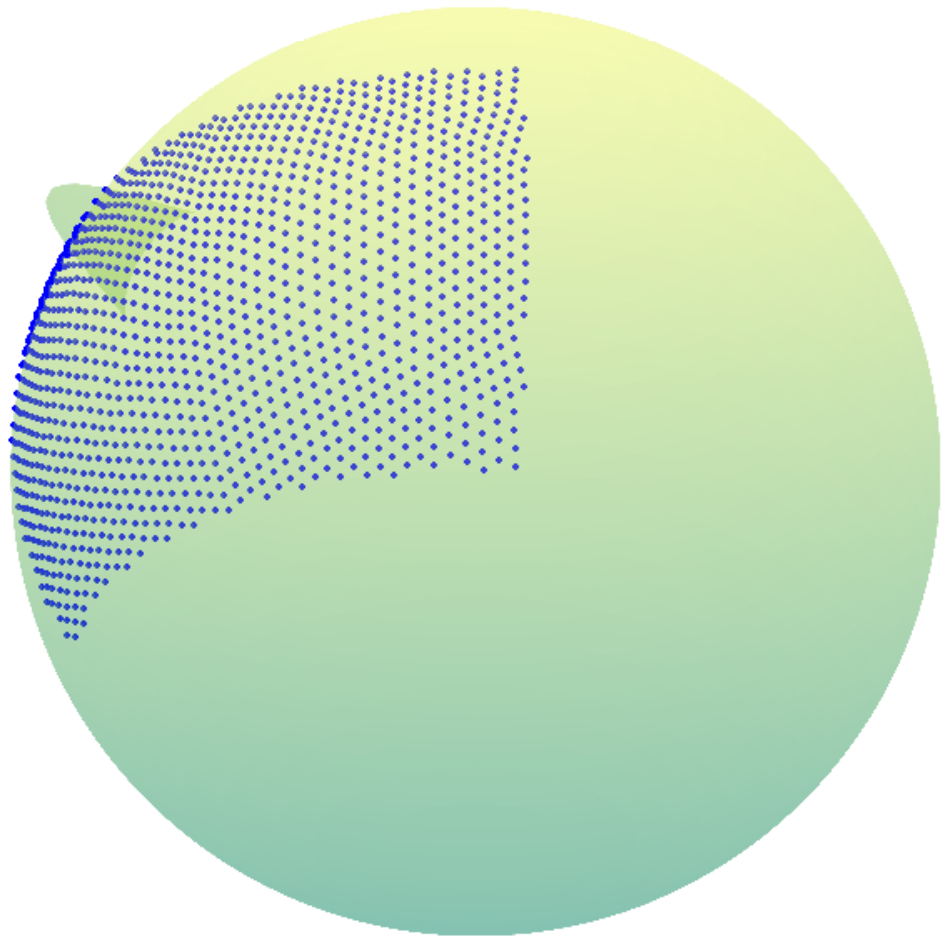}
    };

     \node at (-6.5,-3) { $W_{n-1}$};
     \node at ( -0.50,-3) { $W_{n-2}$};
     \node at ( 5.5,-3) { $W_{n-3}$};

    \end{tikzpicture}
    \caption{Support of MB functions corresponding to projection
      coefficients with absolute values greater than $10^{-4}$. The
      Gaussian perturbation $w(\theta,\varphi)$ is over imposed on the
      unit sphere. Notice that the support indicates the location of
      the detected signal $w(\phi,\varphi)$ from the
      $u(\phi,\varphi,\omega)$.}
    \label{mls:fig5}
\end{figure}

\section{Last Comments} In this paper we have developed a new approach for
change detection by applying the tools that are available to us from
functional analysis. By leveraging the power of the KL and other
tensor product expansions a multilevel nested functional spaces are
constructed. These spaces can be used to detect extraneous signals
that are orthogonal to the truncated eigenspace.

Our results show that this method is very flexible allowing the
application to complex domains. Furthermore, this approach can also
applied to other forms of tensor product expansions such as Polynomial
Chaos Expansions (PCE).  We have mostly shown examples of detecting a
fixed perturbation. However, this approach can be extended to fully
stochastic perturbations.  Future work involves collecting samples and
formulating hypothesis tests. In addition, we also envision
applications to machine learning classification.

\bigskip

\noindent 
\textbf{Acknowledgements:} I appreciate the help and advice from 
Trevor Martin. In particular, for proof reading the manuscript.

\bibliographystyle{plain}

\begin{thebibliography}{10}

\bibitem{Aue2012}
A.~Aue, S.~H\"{o}rmann, L.~Horv\'{a}th, M.~Hu\^{s}kov\'{a}, and J.~G.
  Steinebach.
\newblock Sequential testing for the stability of high-frequency portfolio
  betas,.
\newblock {\em Econometric Theory}, 28:804, 2012.

\bibitem{Aue2009}
A.~Aue, S.~H\"{o}rmann, L.~Horv\'{a}th, and M.~Reimherr.
\newblock Break detection in the covariance structure of multivariate time
  series models.
\newblock {\em The Annals of Statistics}, 37:4046, 2009.

\bibitem{Aue2013}
A.~Aue and L.~Horv\'{a}th.
\newblock Structural breaks in time series.
\newblock {\em Journal of Time Series Analysis}, 34:1, 2013.

\bibitem{Castrillon2020}
J.~E. Castrill\'{o}n-Cand\'{a}s.
\newblock High dimensional multilevel {Kriging}: A computational mathematics
  approach.
\newblock {\em Arxiv}, 2020.
\newblock \url{https://arxiv.org/abs/1701.00285}.

\bibitem{Castrillon2002}
J.~E. {Castrill\'{o}n-Cand\'{a}s} and K.~{Amaratunga}.
\newblock Fast estimation of continuous karhunen-loeve eigenfunctions using
  wavelets.
\newblock {\em IEEE Transactions on Signal Processing}, 50(1):78--86, 2002.

\bibitem{Castrillon2016a}
J.~E. Castrill\'{o}n-Cand\'{a}s, M.~G. Genton, and R.~Yokota.
\newblock Multi-level restricted maximum likelihood covariance estimation and
  {Kriging} for large non-gridded spatial datasets.
\newblock {\em Spatial Statistics}, 18, Part A:105 -- 124, 2016.
\newblock Spatial Statistics Avignon: Emerging Patterns.

\bibitem{Castrillon2013}
J.~E. Castrill\'{o}n-Cand\'{a}s, J.~Li, and V.~Eijkhout.
\newblock A discrete adapted hierarchical basis solver for radial basis
  function interpolation.
\newblock {\em BIT Numerical Mathematics}, 53(1):57--86, 2013.

\bibitem{Chu1996}
C.-S.~J. Chu, M.~Stinchcombe, and H.~White.
\newblock Monitoring structural change.
\newblock {\em Econometrica}, 64:1045, 1996.

\bibitem{Dasgupta2008}
S.~Dasgupta and Y.~Freund.
\newblock Random projection trees and low dimensional manifolds.
\newblock In {\em Proceedings of the Fortieth Annual ACM Symposium on Theory of
  Computing}, STOC '08, pages 537--546, New York, NY, USA, 2008. ACM.

\bibitem{Davis1995}
R.~A. Davis, D.~Huang, and Y.-C. Yao.
\newblock Testing for a change in the parameter values and order of an
  autoregressive model.
\newblock {\em The Annals of Statistics}, 23:282, 1995.

\bibitem{Dette2020}
H.~Dette and J~G\"{o}smann.
\newblock A likelihood ratio approach to sequential change point detection for
  a general class of parameters.
\newblock {\em Journal of the American Statistical Association},
  115(531):1361--1377, 2020.

\bibitem{Fremdt2014}
S.~Fremdt.
\newblock Page’s sequential procedure for change-point detection in time
  series regression.
\newblock {\em Statistics}, 48:1, 2014.

\bibitem{Harbrecht2016}
H.~Harbrecht, M.~Peters, and M.~Siebenmorgen.
\newblock Analysis of the domain mapping method for elliptic diffusion problems
  on random domains.
\newblock {\em Numerische Mathematik}, 134(4):823--856, 2016.

\bibitem{Hinkley1971}
D.~V. Hinkley.
\newblock Inference about the change-point from cumulative sum tests.
\newblock {\em Biometrika}, 58:509, 1971.

\bibitem{Horvath2004}
L.~Horv\'{a}th, M.~Hu\^{s}kov\'{a}, P.~Kokoszka, and J.~Steinebach.
\newblock Monitoring changes in linear models.
\newblock {\em Journal of Statistical Planning and Inference}, 126:225, 2004.

\bibitem{Istas2006}
J.~Istas.
\newblock {Karhunen–Loe\`{e}ve} expansion of spherical fractional brownian
  motions.
\newblock {\em Statistics and Probability Letters}, 76:1578--1583, 2006.

\bibitem{Jandyala2013}
V.~Jandhyala, S.~Fotopoulos, I.~MacNeill, and P.~Liu.
\newblock Inference for single and multiple change-points in time series.
\newblock {\em Journal of Time Series Analysis}, 34:423, 2013.

\bibitem{Jirak2015}
M.~Jirak.
\newblock Uniform change point tests in high dimension.
\newblock {\em The Annals of Statistics}, 43:2451, 2015.

\bibitem{Kirch2018}
C.~Kirch and S.~Weber.
\newblock Modified sequential change point procedures based on estimating
  functions.
\newblock {\em Electronic Journal of Statistics}, 12:1579, 2018.

\bibitem{Lai1995}
T.~L. Lai.
\newblock Sequential changepoint detection in quality control and dynamical
  systems.
\newblock {\em Journal of the Royal Statistical Society, Series B}, 57:613,
  1995.

\bibitem{Vanlaven2010}
K.~V. Laven.
\newblock {\em Grid Sphere}.
\newblock 2010.
\newblock
  \url{https://www.mathworks.com/matlabcentral/fileexchange/28842-grid-sphere}.

\bibitem{Light1985}
W.A. Light and E.W. Cheney.
\newblock {\em Approximation theory in tensor product spaces.}, volume 1169 of
  {\em Lecture notes in mathematics}.
\newblock Springer, New York, 1985.

\bibitem{Loeve1978}
M.~Lo\`{e}ve.
\newblock {\em Probability theory}, volume~II.
\newblock {Springer-Verlag}, $4^{th}$ edition, 1978.
\newblock Graduate Texts in Mathematics.

\bibitem{Matlab2018}
MATLAB.
\newblock {\em version 9.4 (R2018a)}.
\newblock The MathWorks Inc., Natick, Massachusetts, 2018.

\bibitem{Moustakides1986}
G.~V. Moustakides.
\newblock Optimal stopping times for detecting changes in distributions.
\newblock {\em The Annals of Statistics}, 14:1379, 1986.

\bibitem{nobile_tempone_08}
F.~Nobile and R.~Tempone.
\newblock Analysis and implementation issues for the numerical approximation of
  parabolic equations with random coefficients.
\newblock {\em International Journal for Numerical Methods in Engineering},
  80(6-7):979--1006, 2009.

\bibitem{Page1954}
E.~S. Page.
\newblock Continuous inspection schemes.
\newblock {\em Biometrika}, 41:100, 1954.

\bibitem{Pape2016}
K.~Pape, D.~Wied, and P.~Galeano.
\newblock Monitoring multivariate variance changes.
\newblock {\em Journal of Empirical Finance}, 39:54, 2016.

\bibitem{Schwab2006}
C.~Schwab and R.~A. Todor.
\newblock {Karhunen–Lo\`{e}ve} approximation of random fields by generalized
  fast multipole methods.
\newblock {\em Journal of Computational Physics}, 217(1):100 -- 122, 2006.
\newblock Uncertainty Quantification in Simulation Science.

\bibitem{Shao2015}
X.~Shao.
\newblock Self-normalization for time series: A review of recent developments.
\newblock {\em Journal of the American Statistical Association}, 110:1797,
  2015.

\bibitem{Shao2010}
X.~Shao and X.~Zhang.
\newblock Testing for change points in time series.
\newblock {\em Journal of the American Statistical Association}, 105:1228,
  2010.

\bibitem{Tausch2001}
J.~Tausch and J.~White.
\newblock Multiscale bases for the sparse representation of boundary integral
  operators on complex geometry.
\newblock {\em SIAM Journal on Scientific Computing}, 24(5):1610--1629, 2003.

\bibitem{Wang2008}
L.~Wang.
\newblock {\em {Karhunen-Loeve} expansions and their applications}.
\newblock PhD thesis, London school of economics and political science, March
  2008.

\bibitem{Wied2013}
D.~Wied and P.~Galeano.
\newblock Monitoring correlation change in a sequence of random variables.
\newblock {\em Journal of Statistical Planning and Inference}, 143:186, 2013.

\bibitem{Zhang2018}
T.~Zhang and L.~Lavitas.
\newblock Unsupervised self-normalized change-point testing for time series.
\newblock {\em Journal of the American Statistical Association}, 113:637, 2018.

\end{thebibliography}

\end{document}